\newcommand{\R}{\mathbb{R}}
\newcommand{\C}{\mathbb{C}}
\newcommand{\E}{\mathbb{E}}
\newtheorem{remark}[theorem]{Remark}
\DeclareMathOperator{\argmax}{\rm arg\, max}
\title{Bayesian experimental design for linear elasticity}
\author{
S. Eberle-Blick\footnotemark[2]
\and N. Hyv\"onen\footnotemark[3]
}
\begin{document}
\maketitle

\renewcommand{\thefootnote}{\fnsymbol{footnote}}
\footnotetext[2]{Goethe-University Frankfurt, Institute of Mathematics, Frankfurt am Main, Germany (eberle@math.uni-frankfurt.de). The work of SE-B was supported by the German Research Foundation (DFG): project number 499303971}
\footnotetext[3]{Aalto University, Department of Mathematics and Systems Analysis, P.O.~Box 11100, FI-00076 Aalto, Finland (nuutti.hyvonen@aalto.fi). The work of NH was supported by the Academy of Finland (decision 348503, 353081).}

\begin{abstract}
  This work considers Bayesian experimental design for the inverse boundary value problem of linear elasticity in a two-dimensional setting. The aim is to optimize the positions of compactly supported pressure activations on the boundary of the examined body in order to maximize the value of the resulting boundary deformations as data for  the inverse problem of reconstructing the Lam\'e parameters inside the object. We resort to a linearized measurement model and adopt the framework of Bayesian experimental design, under the assumption that the prior and measurement noise distributions are mutually independent Gaussians. This enables the use of the standard Bayesian A-optimality criterion for deducing optimal positions for the pressure activations. The (second) derivatives of the boundary measurements with respect to the Lam\'e parameters and the positions of the boundary pressure activations are deduced to allow minimizing the corresponding objective function,~i.e.,~the trace of the covariance matrix of the posterior distribution, by a gradient-based optimization algorithm. Two-dimensional numerical experiments are performed to demonstrate the functionality of our approach.
\end{abstract}

\renewcommand{\thefootnote}{\arabic{footnote}}

\begin{keywords}
Bayesian experimental design, linear elasticity, A-optimality, inverse problem, Lam\'e parameters
\end{keywords}

\begin{AMS}
    	35J25, 35Q74, 62K05, 62F15, 65N21, 74B05	
\end{AMS}

\pagestyle{myheadings}
\thispagestyle{plain}
\markboth{S. EBERLE-BLICK AND N. HYV\"ONEN}{EXPERIMENTAL DESIGN FOR LINEAR ELASTICITY}

\section{Introduction}\label{intro}
Nondestructive testing based on mechanical probing of a physical body can be utilized in engineering, geosciences and medical imaging \cite{Ammari15,Doyley12,Oberai04}. Under suitable assumptions, such testing can be mathematically formulated as a quest for information on the Lam\'e parameters inside the investigated object in the framework of linear elasticity. We refer to~\cite{Barbone04, Beretta14a, Beretta14b, Carstea18, Eskin02, Ikehata90, Ikehata06, Ikehata99, Imanuvilov11, Nakamura99,Nakamura93,Nakamura94,Nakamura95} for theoretical results and to~\cite{Andrieux99,Doubova20, Eberle21c, Eberle21a,Eberle22, Eberle23, Ferrier19,Hubmer18,Jadamba08,Lin17,Marin02,Marin05,Oberai03,Seidl19,Seidl20,Steinhorst12} for reconstruction methods related to the inverse problem of linear elasticity. In this work, we acknowledge that any practical measurement setting related to nondestructive testing in the framework of linear elasticity allows only a finite number of boundary pressure activations. Our aim is to choose the activation positions so that the value of the measurements on the resulting boundary deformations of the examined body is maximized in the inverse problem of reconstructing the Lam\'e parameters. Many aspects of our work are motivated by the experimental setup and results in \cite{Eberle21b}. 

A Bayesian optimal design $p^*$ maximizes over the set of admissible designs $\mathcal{P}$ the expectation of the utility function $\E_{u,y}[U( p ; u,y)]$, with $y \in \mathcal{Y}$ being the data and $ u\in \mathcal{U}$ the unknown in the studied inverse problem~\cite{chaloner1995bayesian}. In our setting, the design parameter $p$ determines the positions of the employed pressure activations on the boundary of the imaged object, the unknown $u$ corresponds to the Lam\'e parameters, and $y$ carries the data on the measured boundary deformations. More concretely,
\begin{equation}
	\label{eq:OED_task}
	p^* 
	 =  \underset{p\in \mathcal{P}}{\argmax} \int_{\mathcal{Y}} \int_\mathcal{U} U(p; u, y) \pi(u \, | \, p, y) \pi(y \, | \, p) \, {\rm d} u \, {\rm d}y,
\end{equation}
where $\pi(u \, | \, p, y)$ and $\pi(y \, | \, p)$ are the posterior distribution for the unknown and the marginalized distribution of the data, respectively, for the design $p$. We consider a standard choice for the utility $U$, namely the \emph{negative quadratic loss function} that measures the distance from $u$ to the posterior mean.

The mere evaluation of the double-integral on the right-hand side of \eqref{eq:OED_task} can be intractably expensive if the dimension of the data space $\mathcal{Y}$ or/and the parameter space $\mathcal{U}$ is high, which is often the case for inverse boundary value problems. However, if the relation between $u$ and $y$ is (assumed to be) linear and the prior for $u$ and the additive measurement noise are mutually independent Gaussians, the double-integral essentially reduces to the trace of the posterior covariance when the aforementioned quadratic loss plays the role of the utility function; a minimizer of this simplified target function is called a Bayesian A-optimal design~\cite{alexanderian2016bayesian,chaloner1995bayesian}. Motivated by this observation, we restrict our attention to the linearized inverse problem of linear elasticity, that is, we replace the nonlinear forward map that sends the Lam\'e parameters to the boundary operator mapping boundary activations to the resulting deformations by its linearization around a background Lam\'e parameter pair.  After discretization, this enables writing the A-optimality target function explicitly with the help of the linearized forward map and the prior and noise covariance matrices. In order to apply a gradient-based minimization algorithm to finding an A-optimal design for the boundary pressure activations, we also introduce the derivative of the $u$-linearized forward map with respect to the positions of the boundary activations parameterized by $p$.

The main contribution of this work is introducing and testing optimization methods for searching A-optimal positions of the boundary pressure activations for the inverse problem of (linearized) linear elasticity. In particular, we are not aware of previous works on applying Bayesian experimental design to the considered setting, although \cite{Etling18} also considers experimental design for linear elasticity but from a different standpoint. Our algorithms are typically able to significantly reduce the value of the A-optimality target function, but they are not guaranteed to locate the globally optimal pressure activation pattern because the target function is expected to suffer from multiple local minima --- especially if many activations are involved in the minimization process. We test a simple heuristic for mitigating this problem. Note that the optimization algorithms can be run offline,~i.e.,~prior to performing any measurements, since in a linear(ized) Gaussian setting the posterior covariance matrix, which defines the A-optimality target, does not depend on the measured data. Moreover, although the optimization of the locations where the boundary deformation is measured could be tackled in exactly the same way due to the symmetry of the underlying partial differential equation, we restrict our attention solely to the pressure activations. A closely related approach for optimizing electrode positions in electrical impedance tomography was studied in~\cite{hyvonen2014eit}.

Our approach is built on a linearization and discretization of the studied inverse problem and the underlying Bayesian {\em optimal experimental design} (OED) problem. As mentioned above, the motivation for the linearization is to allow explicit integration of \eqref{eq:OED_task}. However, there exist approaches to tackling Bayesian OED without such a simplifying assumption; see,~e.g.,~\cite{alexanderian2016fast,beck2018fast,huan2010accelerated,huan2013simulation,long2013fast,wu2021fast,wu2020fast}. Moreover, one can also aim to avoid discretizing the problem setting before employing OED; see,~e.g.,~the series of papers on Bayesian OED in the framework of infinite-dimensional inverse problems~\cite{alexanderian2016bayesian,alexanderian2014optimal,alexanderian2016fast,alexanderian2021optimal}. The stability of the expected utility under approximations, such as linearization and discretization, in Bayesian OED has recently been investigated in~\cite{duong2022stability}. For general reviews on the topic of Bayesian OED, we refer to~\cite{alexanderian2021optimal_review,chaloner1995bayesian, rainforth2023modern,ryan2016review}.

This text is organized as follows. The forward model of linear elasticity, as well as its Fr\'echet derivatives with respect to the Lam\'e parameters and the positions of the boundary activations, is described in Section~\ref{sec:forward}. Section~\ref{sec:discretized} discretizes the forward map and its derivatives, and Section~\ref{sec:bayes} introduces the finite-dimensional setting for Bayesian inversion and OED. The implementation of the optimization algorithm is discussed in Section~\ref{sec:implementation}, and the numerical experiments are presented in Section~\ref{sec:numerics}. Finally, Section~\ref{sec:conclusions} lists the concluding remarks.

\section{Forward model and its differentiability}
\label{sec:forward}

This section first describes our model for varying the pressure activation on the boundary of the examined two-dimensional object. Subsequently, the needed (second) Fr\'echet derivatives of the (boundary) deformation field are introduced.

\subsection{Forward model}
\label{sec:real_forward}

Let $\Omega\subset \mathbb{R}^d$, $d=2$,\footnote{Apart from the parametrization for the positions of the pressure activation on $\partial \Omega$, most of the presented analysis would also be valid for $d=3$.} be an open simply-connected domain with a $C^{1,\alpha}$, $\alpha > 0$, boundary, and denote the exterior unit normal of $\partial \Omega$ by $\nu$. The boundary $\partial \Omega = \overline{\Gamma}_{\rm N} \cap  \overline{\Gamma}_{\rm D}$ is decomposed into disjoint open Neumann and Dirichlet parts $\Gamma_{\rm N}$ and $\Gamma_{\rm D}$. Assume further that we investigate the Lam\'e parameter pair $\tau = (\lambda,\mu) \in L^\infty_+(\Omega)^2$ around some background value $\tau_0 = (\lambda_0,\mu_0) \in L^\infty_+(\Omega)^2$, with
$$
L^\infty_+(\Omega) = \big\{ \kappa \in L^\infty(\Omega) \ \big| \ {\rm essinf} \big( {\rm Re} (\kappa) \big) > 0 \big\}
$$
denoting the space of essentially bounded functions with real parts that are strictly positive.

The variational formulation of the standard forward problem of linear elasticity with a square-integrable pressure field $g: \Gamma_{\rm N} \to \R^d$ as the boundary load,~i.e.,
\noindent
\begin{align}
\nabla \cdot \left(\lambda (\nabla\cdot u)I + 2\mu \hat{\nabla} u \right) &=0 \,\,\quad \text{in}\,\,\Omega,\label{mod_direct_1}\\
\left(\lambda (\nabla\cdot u)I + 2\mu \hat{\nabla} u \right) \nu &={g} \quad \text{on}\,\, \Gamma_{\textup N},\\
u&=0 \quad \,\, \text{on}\,\, \Gamma_{\textup D},\label{mod_direct_3}
\end{align}
\noindent
is to find $u \in\mathcal{V}$ such that~\cite{Ammari15}
\begin{align}
  \label{var_form}
B_{\tau}(u,v) =\int_{\Gamma_{\textup N}}g \cdot v \,{\rm d}s \quad \text{ for all } v\in \mathcal{V}.
\end{align}
The bilinear form $B_{\tau}: \mathcal{V} \times \mathcal{V} \to \C$ and the variational space $\mathcal{V}$ are, respectively, defined by
\begin{equation}
  \label{eq:bilinear}
  B_{\tau}(w,v) = \int_{\Omega} 2 \mu\, \hat{\nabla}w : \hat{\nabla}v  + \lambda \nabla \cdot w \,\nabla\cdot  v\,{\rm d}x
\end{equation}
and 
\[
\mathcal{V}:=\left\{   v\in H^1(\Omega)^d:  v|_{\Gamma_{\textup D}}=0\right\},
\]
with the latter equipped with the norm of $H^1(\Omega)^d$. In what follows, the boundary pressure $g = g_p$ is parametrized by $p\in\mathbb{R}$ that (periodically) defines its position on $\partial \Omega$. To simplify the analysis, we model $g_p$ as a function on the whole of $\partial \Omega$, with the understanding that $g_p|_{\Gamma_{\rm N}}$ defines the actual load in \eqref{var_form}. The solution to \eqref{var_form} corresponding to $g_p \in L^2(\partial \Omega; \R^d)$ is denoted by $u_p \in \mathcal{V}$.
  
It is well known that the bilinear form $B_{\tau}: \mathcal{V} \times \mathcal{V} \to \C$ is continuous and coercive~\cite{Ammari15}: for all $w,v \in \mathcal{V}$,
\begin{align}
  \big | B_{\eta}(w, v) \big| &\leq C \| \eta \|_{L^\infty(\Omega)^2} \| w \|_{H^1(\Omega)^d} \| v \|_{H^1(\Omega)^d}, \label{eq:cont} \\[1mm]
   {\rm Re} \big( B_{\tau}(w, \overline{w}) \big) &\geq c \| w \|_{H^1(\Omega)^d}^2, \label{eq:coer}
\end{align}
where $C = C(\Omega) > 0$ and $c = c(\Omega, \Gamma_{\rm D}, \tau) > 0$. Take note that \eqref{eq:cont} holds for all $\eta \in L^\infty(\Omega)^2$, whereas \eqref{eq:coer} is valid only for $\tau \in L^\infty_+(\Omega)^2$. Moreover, the positive constant $c$ in \eqref{eq:coer} can be chosen to be independent of $\tau \in \mathcal{B}$ for any closed and bounded subset $\mathcal{B} \subset L^\infty_+(\Omega)^2$. In particular, as the right-hand side of \eqref{var_form} obviously defines a continuous linear form on $\mathcal{V}$, it follows from the Lax--Milgram theorem that
\begin{equation}
  \label{eq:bounded}
  \| u_p \|_{H^1(\Omega)^d} \leq C'  \| g_p|_{\Gamma_{\rm N}} \|_{L^2(\Gamma_{\rm N})^d} \leq C' \| g_p \|_{L^2(\partial \Omega)^d},
\end{equation}
where $C' = C'(\Omega, \Gamma_{\rm D}, \tau) > 0$ is independent of $g_p$.

\subsection{Fr\'echet derivatives of the forward operator}
\label{sec:frechet}
For the definitions and analysis of this section, it is essential to recall that $d=2$. Let us explicitly introduce the dependence of $g_p: \partial \Omega \to \R^d$ on its position $p \in \R$ and consider the Fr\'echet differentiability of the map  $\mathbb{R}\ni p \mapsto g_p \in L^2(\partial \Omega)^d$. To this end, let $\partial\Omega$ be parametrized with respect to its arclength as
\begin{align*}
[0,L)\ni s \mapsto \gamma(s)\in\mathbb{R}^d
\end{align*}
and continue $\gamma$ to be an $L$-periodic mapping in $C^{1,\alpha}(\R; \R^d)$. We mildly abuse the notation by denoting the `shape' of the boundary pressure field by $g \in W^{1,\infty}(\partial \Omega)^d$; it is represented in our arclength parametrization as $\tilde{g}=g \circ \gamma: \R \to \mathbb{R}^d$. A family of boundary pressure fields is defined by moving $\tilde{g}$ along the real axis as
\begin{align*}
\tilde{g}_p(s) =\tilde{g}\circ T_{-p}(s) = \tilde{g}\circ T_{s}(-p) = \tilde{g}_{-s}(-p) = \tilde{g}(s-p), \qquad s, p \in \R,
\end{align*}
where $T_q(t) = t+q$ denotes a translation by $q$ on $\R$. The parameter-dependent boundary pressure field, 
\begin{align*}
\R \ni p \mapsto g_p \in  W^{1,\infty}(\partial \Omega)^d \subset L^2(\partial \Omega)^d,
\end{align*}
is then defined as $g_p = \tilde{g}_p \circ {\gamma^{-1}}|_{\partial \Omega}$, where $\gamma$ is treated as a bijective mapping between $[0, L)$ and $\partial \Omega$. Take a note that in what follows we continue to implicitly assume that $g \in W^{1,\infty}(\partial \Omega)^d$,~i.e.,~we require Lipschitz continuity from the shape of the boundary pressure.

\begin{lemma}
  \label{lemma:p_diff}
The mapping $\mathbb{R}\ni p \mapsto g_p \in L^2(\partial \Omega)^d$ is Fr\'echet differentiable. The associated derivative at $p \in \R$ is given by the linear map
  $$
  D_{p}g_p: 
  \left\{
  \begin{array}{l}
  h \mapsto - h \, g_p', \\[1mm]
  \R \to L^2(\partial \Omega)^d,
\end{array}
\right.
$$
where $g_p' = \tilde{g}_p' \circ {\gamma^{-1}}|_{\partial \Omega}$ and $\tilde{g}_p' = \tilde{g}' \circ  T_{-p}$, with $\tilde{g}'$ denoting the weak derivative of $\tilde{g}$.
\end{lemma}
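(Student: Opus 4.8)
The plan is to exploit that $\gamma$ is a unit-speed (arclength) parametrization, so that the pullback by $\gamma$ is an isometry between $L^2(\partial \Omega)^d$ and the space of $L$-periodic functions in $L^2([0,L))^d$: indeed $\| g_p \|_{L^2(\partial \Omega)^d}^2 = \int_0^L |\tilde g_p(s)|^2 |\gamma'(s)| \, {\rm d}s = \| \tilde g_p \|_{L^2([0,L))^d}^2$ because $|\gamma'| \equiv 1$. Under this identification $g_p$ and the proposed derivative $-g_p'$ correspond to $\tilde g_p = \tilde g \circ T_{-p}$ and $-\tilde g_p' = -\tilde g' \circ T_{-p}$. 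Since $g \in W^{1,\infty}(\partial \Omega)^d$ is Lipschitz and $\gamma \in C^{1,\alpha}(\R;\R^d)$ is (globally, by $L$-periodicity) Lipschitz, the composition $\tilde g = g \circ \gamma$ is Lipschitz on $\R$, hence absolutely continuous with $\tilde g' \in L^\infty(\R)^d$. Because Fr\'echet differentiability of a map from $\R$ into a Banach space is equivalent to the existence of the strong limit of its difference quotient, it therefore suffices to prove that
\begin{equation*}
\frac{1}{|h|} \, \big\| \tilde g_{p+h} - \tilde g_p + h \, \tilde g_p' \big\|_{L^2([0,L))^d} \longrightarrow 0 \qquad \text{as } h \to 0.
\end{equation*}

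Next I would reduce to the base point $p = 0$: by the change of variables $s \mapsto s + p$ together with $L$-periodicity, the $L^2([0,L))^d$ norm is invariant under the translation $T_{-p}$, so the displayed quantity equals $|h|^{-1} \| \tilde g(\cdot - h) - \tilde g(\cdot) + h \, \tilde g'(\cdot) \|_{L^2([0,L))^d}$. For a Lipschitz function the fundamental theorem of calculus gives $\tilde g(s-h) - \tilde g(s) = -\int_0^h \tilde g'(s-t) \, {\rm d}t$ (with the signed integral for $h < 0$), whence the remainder admits the representation
\begin{equation*}
\tilde g(s-h) - \tilde g(s) + h \, \tilde g'(s) = -\int_0^h \big( \tilde g'(s-t) - \tilde g'(s) \big) \, {\rm d}t .
\end{equation*}

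Finally, applying Minkowski's integral inequality in $L^2([0,L))^d$ and dividing by $|h|$ bounds the difference quotient by $\sup_{0 < |t| \le |h|} \| \tilde g'(\cdot - t) - \tilde g' \|_{L^2([0,L))^d}$, and the strong continuity of the translation group on $L^2$ forces this supremum to vanish as $h \to 0$, which completes the argument. The crux of the proof is precisely this last step: since $\tilde g'$ is merely $L^\infty$ and need not be continuous, one cannot extract the $o(|h|)$ rate from a pointwise Taylor expansion, and it is the $L^2$-continuity of translations that delivers it. A minor point worth recording is that $d = 2$ enters exactly here, in that it is what permits $\partial \Omega$ to be described by a single arclength coordinate $s$; for $d = 3$ the boundary would be a surface and this one-parameter reduction would fail.
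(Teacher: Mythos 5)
Your proof is correct, but it follows a genuinely different route from the paper's. The paper argues pointwise: since $\tilde g$ is Lipschitz, it is differentiable almost everywhere with classical derivative equal to the weak one, so the difference quotient $h^{-1}\bigl(\tilde g_p(s)-\tilde g_p(s-h)-h\,\tilde g_p'(s)\bigr)$ tends to zero for a.e.\ $s$; the Lipschitz constant and $\tilde g_p'\in L^\infty(\R)^d$ give a uniform bound, and the dominated convergence theorem upgrades this to convergence in $L^2(0,L)$, after which the comparison of the $L^2(\partial\Omega)$ and $L^2(0,L)$ norms finishes the argument. You avoid pointwise differentiability entirely: you reduce to $p=0$ by translation invariance of the periodic $L^2$ norm, represent the remainder via the fundamental theorem of calculus for absolutely continuous functions as $-\int_0^h\bigl(\tilde g'(s-t)-\tilde g'(s)\bigr)\,{\rm d}t$, and conclude with Minkowski's integral inequality plus the strong continuity of translations on $L^2$ of the circle. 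Each approach has its advantage: the paper's is shorter, resting on two classical facts (a.e.\ differentiability of Lipschitz functions and dominated convergence), while yours is more general --- the paper's domination step genuinely uses $\tilde g'\in L^\infty$, whereas your argument uses the Lipschitz hypothesis only to place $\tilde g'$ in $L^2$, so it proves the lemma verbatim under the weaker assumption $\tilde g\in W^{1,2}$ (e.g.\ $g\in H^1(\partial\Omega)^d$), and indeed in $W^{1,p}$ with $L^p$ target. One cosmetic remark: your closing comment attaches the role of $d=2$ to the translation-continuity step, but the dimension really enters only in the setup, namely in the one-parameter arclength description of $\partial\Omega$; the analytic core of both proofs is one-dimensional and would be unchanged in any dimension where such a parametrization of the activation position is used.
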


\begin{proof}
  Since by assumption $g \in  W^{1,\infty}(\partial \Omega)^d$ and $\partial \Omega$ is of class $C^{1,\alpha}$, obviously also $\tilde{g} \in  W^{1,\infty}(\R)^d$. Hence, $\tilde{g}$ is Lipschitz continuous (after being modified on a set of zero measure), and thus it is differentiable almost everywhere on $\R$. The same conclusions also hold for the translated version $\tilde{g}_p = \tilde{g} \circ T_{-p}$ due to the smoothness of a translation. In particular,
  $$
  \frac{1}{h} \big( \tilde{g}_{p+h} (s) - \tilde{g}_p(s) + h \, \tilde{g}_p'(s) \big) = - \frac{1}{h} \big( \tilde{g}_p (s) - \tilde{g}_p(s-h) - h \, \tilde{g}_p'(s) \big)  \to 0 \quad \text{as } 0 \not= h \to 0
  $$
  for almost all $s \in [0,L)$. Moreover,
  $$
  \left|\frac{1}{h} \big( \tilde{g}_p (s) - \tilde{g}_p(s-h) - h \, \tilde{g}_p'(s) \big) \right|^2 \leq C < \infty
  $$
  for almost all $s \in \R$ and all $h \in \R$ due to the Lipschitz continuity of $\tilde{g}$ and since $\tilde{g}_p' \in L^\infty(\R)^d$. Hence, it follows from the dominated convergence theorem that
  $$
  \frac{1}{h} \big\| g_{p+h} - g_p +  h \, g_p' \big\|_{L^2(\partial \Omega)} \leq
  \frac{C}{h} \big\| \tilde{g}_{p+h} - \tilde{g}_p +  h \, \tilde{g}_p' \big\|_{L^2(0,L)} \to 0 \quad \text{as } 0 \not= h \to 0,
  $$
  which proves the claim as $p \in \R$ is arbitrary.
  \end{proof}

Since the mapping $\mathbb{R}\ni p \mapsto g_p \in L^2(\partial \Omega)^d$ is Fr\'echet differentiable by virtue of Lemma~\ref{lemma:p_diff} and the mapping $  L^2(\partial \Omega)^d \ni g_p \mapsto u_p \in \mathcal{V}$ is linear and bounded, it immediately follows from the chain rule for Banach spaces that the mapping
$$
N: 
  \left\{
  \begin{array}{l}
     p \mapsto u_p,
    \\[1mm]
  \R \to  \mathcal{V}
  $$
  \end{array}
\right.
$$
is also Fr\'echet differentiable.
\begin{corollary}
  \label{corollary:p_diff}
  The mapping $N:  \R \to  \mathcal{V}$ is Fr\'echet differentiable. The associated derivative at $p \in \R$ is given by the linear map
  $$
  D_p N(p): 
  \left\{
  \begin{array}{l}
  h \mapsto h u_p', \\[1mm]
  \R \to  \mathcal{V},
\end{array}
\right.
$$
where $u_p'$ is the unique solution to \eqref{var_form} with $g_p$ replaced by $-g_p' \in L^\infty(\partial \Omega)^d$.
\end{corollary}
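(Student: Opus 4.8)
The plan is to make precise the chain-rule argument sketched just above the statement. I would introduce the solution operator
\[
S: L^2(\partial\Omega)^d \to \mathcal{V}, \qquad g \mapsto u,
\]
where $u$ is the unique solution of \eqref{var_form} with boundary load $g$; the estimate \eqref{eq:bounded} together with the Lax--Milgram theorem shows that $S$ is a well-defined \emph{bounded linear} operator. Writing $G: \R \to L^2(\partial\Omega)^d$, $p \mapsto g_p$, we then have the factorization $N = S \circ G$, in which $G$ is Fr\'echet differentiable by Lemma~\ref{lemma:p_diff}. The existence of $D_p N(p)$ is already guaranteed by the chain rule, so the real content of the corollary is the identification of the derivative's explicit form.

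Next I would differentiate the composition. Since $S$ is linear and bounded, its Fr\'echet derivative at every point equals $S$ itself, and hence the chain rule for Banach spaces gives
\[
D_p N(p) = S \circ D_p G(p).
\]
By Lemma~\ref{lemma:p_diff} we have $D_p G(p): h \mapsto -h\, g_p'$, so using the linearity of $S$,
\[
D_p N(p)(h) = S(-h\, g_p') = h\, S(-g_p').
\]
Setting $u_p' := S(-g_p')$, i.e.\ the unique solution of \eqref{var_form} with $g_p$ replaced by $-g_p'$, yields exactly the claimed expression $D_p N(p): h \mapsto h\, u_p'$.

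The only point that requires attention is that $S(-g_p')$ is well defined as an element of $\mathcal{V}$. Since $g \in W^{1,\infty}(\partial\Omega)^d$, we have $g_p' \in L^\infty(\partial\Omega)^d \subset L^2(\partial\Omega)^d$, so $-g_p'$ is an admissible load and $u_p'$ exists and is unique by the Lax--Milgram theorem, precisely as recorded in \eqref{eq:bounded}. I do not anticipate any genuine obstacle here: the argument is a routine application of the chain rule once the boundedness of $S$ and the explicit form of $D_p G(p)$ from Lemma~\ref{lemma:p_diff} are in hand, with the only bookkeeping being the tracking of the sign through the linearity of $S$.
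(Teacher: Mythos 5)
Your proof is correct and is essentially the paper's own argument: the paper also deduces the corollary from the factorization of $N$ into the bounded linear solution operator (whose boundedness is exactly \eqref{eq:bounded}) composed with the Fr\'echet differentiable map $p \mapsto g_p$ from Lemma~\ref{lemma:p_diff}, invoking the chain rule for Banach spaces. Your explicit tracking of the sign and the observation that $-g_p' \in L^\infty(\partial\Omega)^d \subset L^2(\partial\Omega)^d$ is an admissible load simply make precise what the paper treats as immediate.
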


Observe that solving \eqref{var_form} can be interpreted as evaluating the mapping
$$
N: 
  \left\{
  \begin{array}{l}
     (p, \tau) \mapsto u_p, \\[1mm]
   \R \times L_+^\infty(\Omega)^2  \to  \mathcal{V},
  \end{array}
\right.
$$
where we have abused the notation by redefining the operator $N$ to have two arguments. It is well known that $N$ is Fr\'echet differentiable with respect to its second variable as well.
\begin{lemma}
  \label{lemma:tau_deriv}
  The mapping $N: \R \times L^\infty_+(\Omega)^2  \to  \mathcal{V}$ is Fr\'echet differentiable with respect to its second variable. The associated derivative at $(p, \tau) \in \R \times L^\infty_+(\Omega)^2$ is given by the linear and bounded map
  $$
  D_{\tau}N(p, \tau): \left\{
  \begin{array}{l}
  \eta \mapsto D_\tau u_p(\eta), \\[1mm]
   L^\infty(\Omega)^2 \to  \mathcal{V},
\end{array}
  \right.
$$
  where $D_\tau u_p(\eta) \in \mathcal{V}$ is the unique solution of
  \begin{equation}
    \label{eq:tau_deriv}
    B_\tau\big(D_\tau u_p(\eta), v\big) = - B_{\eta}(u_p, v)
  \end{equation}
  for all $v \in \mathcal{V}$.
\end{lemma}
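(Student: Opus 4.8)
The plan is to invoke the standard argument for differentiating the solution of a coercive variational problem with respect to coefficients that enter the bilinear form linearly. The crucial structural observation is that $\tau \mapsto B_\tau$ is itself linear, so that $B_{\tau+\eta} = B_\tau + B_\eta$ holds exactly, with no higher-order remainder in $\tau$; the only nonlinearity in the dependence $\tau \mapsto u_p$ therefore enters implicitly through the solution operator, and this is precisely what makes the residual quadratically small.

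First I would verify that the candidate derivative is well defined. Since $B_\tau$ is continuous and coercive on $\mathcal{V} \times \mathcal{V}$ by \eqref{eq:cont} and \eqref{eq:coer}, and $v \mapsto -B_\eta(u_p, v)$ is a continuous linear functional on $\mathcal{V}$ by \eqref{eq:cont}, the Lax--Milgram theorem guarantees that \eqref{eq:tau_deriv} admits a unique solution $D_\tau u_p(\eta) \in \mathcal{V}$. Linearity of $\eta \mapsto D_\tau u_p(\eta)$ follows from the linearity of $\eta \mapsto B_\eta$ combined with uniqueness, while testing \eqref{eq:tau_deriv} with $v = \overline{D_\tau u_p(\eta)}$, using coercivity on the left and continuity on the right, yields $\| D_\tau u_p(\eta) \|_{H^1(\Omega)^d} \leq (C/c) \, \| u_p \|_{H^1(\Omega)^d} \| \eta \|_{L^\infty(\Omega)^2}$, so the map is also bounded.

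Next I would set $w_\eta = u_p(\tau+\eta) - u_p(\tau)$ and subtract the variational identity \eqref{var_form} written for $\tau+\eta$ from the one written for $\tau$. Because the right-hand side of \eqref{var_form} is independent of $\tau$ and $B_{\tau+\eta} = B_\tau + B_\eta$, this produces $B_\tau(w_\eta, v) = -B_\eta\big(u_p(\tau+\eta), v\big)$ for all $v \in \mathcal{V}$. Comparing with \eqref{eq:tau_deriv}, the error $e_\eta = w_\eta - D_\tau u_p(\eta)$ then satisfies $B_\tau(e_\eta, v) = -B_\eta(w_\eta, v)$ for all $v \in \mathcal{V}$; the two inhomogeneities have combined so that the residual is driven only by the increment $w_\eta$ rather than by the full solution $u_p$.

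Finally I would close the estimate by coercivity and continuity. Testing the identity for $w_\eta$ with $v = \overline{w}_\eta$ gives $\| w_\eta \|_{H^1(\Omega)^d} \leq (C/c) \, \| u_p(\tau+\eta) \|_{H^1(\Omega)^d} \| \eta \|_{L^\infty(\Omega)^2}$, and since $\tau+\eta$ lies in a fixed closed and bounded subset of $L^\infty_+(\Omega)^2$ for all sufficiently small $\eta$, the a priori bound \eqref{eq:bounded} controls $\| u_p(\tau+\eta) \|_{H^1(\Omega)^d}$ uniformly, whence $\| w_\eta \|_{H^1(\Omega)^d} = O(\| \eta \|_{L^\infty(\Omega)^2})$. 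Testing the identity for $e_\eta$ with $v = \overline{e}_\eta$ and inserting this bound yields $\| e_\eta \|_{H^1(\Omega)^d} \leq (C/c) \, \| w_\eta \|_{H^1(\Omega)^d} \| \eta \|_{L^\infty(\Omega)^2} = O(\| \eta \|_{L^\infty(\Omega)^2}^2)$, so that $\| e_\eta \|_{H^1(\Omega)^d} / \| \eta \|_{L^\infty(\Omega)^2} \to 0$ and the asserted Fr\'echet differentiability follows. The point requiring the most care is ensuring that the coercivity constant $c$, and hence the a priori bound, can be taken uniform over a neighborhood of $\tau$ --- which is exactly the uniformity over closed and bounded subsets of $L^\infty_+(\Omega)^2$ noted after \eqref{eq:coer} --- together with the correct handling of the complex conjugate in the coercivity estimate \eqref{eq:coer}.
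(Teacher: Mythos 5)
Your proposal is correct and follows exactly the route the paper indicates: the paper's proof is a one-line appeal to ``the coercivity of the bilinear form and its linear dependence on $\tau$ in a standard manner'' (with a citation), and your argument is precisely that standard argument written out in full --- Lax--Milgram for well-posedness of \eqref{eq:tau_deriv}, the increment identity $B_\tau(w_\eta,v) = -B_\eta(u_p(\tau+\eta),v)$, and the quadratic residual bound via uniform coercivity on a neighborhood of $\tau$. Your attention to testing with complex conjugates and to the uniformity of the coercivity constant over closed bounded subsets of $L^\infty_+(\Omega)^2$ is exactly the care the cited ``standard manner'' requires.
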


\begin{proof}
  The result follows by utilizing the coercivity of the bilinear form \eqref{eq:bilinear} as well as its linear dependence on $\tau$ in a standard manner;~cf.,~e.g.~\cite{garde2021series}.
  \end{proof}

To complete this section, let us consider the second derivative $D_p D_\tau N(p,\tau)$, which is the tool needed for building rudimentary differentiation-based algorithms for (Bayesian) optimal experimental design in the framework of linear elasticity.

\begin{theorem}
  \label{theorem:second_deriv}
  The mapping $D_\tau N(\, \cdot \,, \tau): \R \to \mathcal{L}(L^\infty(\Omega)^2, \mathcal{V})$ is Fr\'echet differentiable. The associated derivative at $(p, \tau) \in \R \times L^\infty_+(\Omega)^2$ is given by $D_p D_\tau N(p, \tau) \in \mathcal{L}(\R, \mathcal{L}(L^\infty(\Omega)^2, \mathcal{V}))$ defined via 
  $$
  D_p D_\tau N(p, \tau): h \mapsto \big( \eta \mapsto h D_\tau u_p'(\eta) \big),
$$
  where $D_\tau u_p'(\eta) \in \mathcal{V}$ is the unique solution of
  \begin{equation}
    \label{eq:second_deriv}
    B_\tau\big(D_\tau u_p'(\eta), v \big) = -B_{\eta}(u'_p, v) \quad \text{for all } v \in \mathcal{V},
  \end{equation}
  and $u'_p \in \mathcal{V}$ is the unique solution of \eqref{var_form} with $g_p$ replaced by $-g'_p \in L^\infty(\partial \Omega)^d$.
\end{theorem}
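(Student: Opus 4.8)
The plan is to establish the Fréchet differentiability of $D_\tau N(\,\cdot\,, \tau)$ with respect to $p$ by composing and combining the two differentiability results already in hand, namely Corollary~\ref{corollary:p_diff} (which gives $p \mapsto u_p'$ as the $p$-derivative of $p \mapsto u_p$) and Lemma~\ref{lemma:tau_deriv} (which characterizes the $\tau$-derivative through the variational equation \eqref{eq:tau_deriv}). The key observation is that for each fixed direction $\eta$, the object $D_\tau u_p(\eta)$ depends on $p$ only through the source $u_p$ appearing on the right-hand side $-B_\eta(u_p, v)$ of \eqref{eq:tau_deriv}, since the bilinear form $B_\tau$ on the left is independent of $p$. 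Thus differentiating in $p$ amounts to differentiating the solution of a variational problem with respect to the $p$-dependence of its data, which is linear in that data.

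\medskip

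\noindent First I would fix $p \in \R$ and consider, for an increment $h \in \R$, the difference quotient of $p \mapsto D_\tau u_p(\eta)$. Writing $w_h := D_\tau u_{p+h}(\eta) - D_\tau u_p(\eta) - h\, D_\tau u_p'(\eta) \in \mathcal{V}$, I would test the variational identities \eqref{eq:tau_deriv} at $p+h$ and $p$ together with \eqref{eq:second_deriv}, and subtract to obtain
\begin{equation*}
  B_\tau(w_h, v) = -B_\eta\big( u_{p+h} - u_p - h\, u_p', v \big) \quad \text{for all } v \in \mathcal{V}.
\end{equation*}
The right-hand side is precisely $-B_\eta$ applied to the remainder term in the Fréchet expansion of $p \mapsto u_p$ supplied by Corollary~\ref{corollary:p_diff}. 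I would then invoke coercivity \eqref{eq:coer} by choosing $v = \overline{w_h}$, combined with the continuity estimate \eqref{eq:cont} for $B_\eta$, to deduce
\begin{equation*}
  \| w_h \|_{H^1(\Omega)^d} \leq \frac{C}{c}\, \| \eta \|_{L^\infty(\Omega)^2}\, \big\| u_{p+h} - u_p - h\, u_p' \big\|_{H^1(\Omega)^d}.
\end{equation*}
Dividing by $|h|$ and letting $h \to 0$, the factor $\| u_{p+h} - u_p - h\, u_p' \|_{H^1(\Omega)^d}/|h| \to 0$ by Corollary~\ref{corollary:p_diff}, which shows that $h \mapsto h\, D_\tau u_p'(\eta)$ is the Fréchet derivative in the scalar variable $p$, with the bound uniform in $\eta$ on the unit ball of $L^\infty(\Omega)^2$.

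\medskip

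\noindent The remaining care is to upgrade this from pointwise-in-$\eta$ convergence to convergence in the operator norm of $\mathcal{L}(L^\infty(\Omega)^2, \mathcal{V})$, so that the derivative genuinely lives in $\mathcal{L}(\R, \mathcal{L}(L^\infty(\Omega)^2, \mathcal{V}))$ as claimed. This is handled by the fact that the estimate above is linear and homogeneous in $\eta$, so taking the supremum over $\| \eta \|_{L^\infty(\Omega)^2} \leq 1$ preserves the bound and the $\eta$-independent remainder drives the whole operator-norm difference quotient to zero. One also verifies that \eqref{eq:second_deriv} is uniquely solvable and that $\eta \mapsto D_\tau u_p'(\eta)$ is bounded, which follows exactly as in Lemma~\ref{lemma:tau_deriv} since $u_p' \in \mathcal{V}$ plays the role that $u_p$ did there. \textbf{The main obstacle} I anticipate is bookkeeping rather than conceptual: keeping the two distinct linearizations straight (the $\tau$-direction $\eta$ versus the scalar $p$-direction $h$) and confirming that the limit survives the passage to the operator norm uniformly in $\eta$. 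The coercivity constant $c$ being choosable independently of $\tau$ on bounded sets, as noted after \eqref{eq:coer}, is what guarantees the estimates are stable, and no genuinely new analytic input beyond \eqref{eq:cont}, \eqref{eq:coer}, and Corollary~\ref{corollary:p_diff} is required.
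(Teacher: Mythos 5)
Your proposal is correct and follows essentially the same route as the paper's own proof: subtract the variational identities \eqref{eq:tau_deriv} at $p$ and $p+h$ together with $h$ times \eqref{eq:second_deriv}, test with the remainder $w_h$ itself, apply coercivity \eqref{eq:coer} and continuity \eqref{eq:cont} to get the bound $\| w_h \| \leq K \|\eta\|_{L^\infty(\Omega)^2} \| u_{p+h} - u_p - h u_p'\|_{H^1(\Omega)^d}$, and conclude via Corollary~\ref{corollary:p_diff} after taking the supremum over the unit ball in $\eta$. Your explicit remarks on testing against $\overline{w_h}$ (matching the form of \eqref{eq:coer}) and on the unique solvability of \eqref{eq:second_deriv} are minor refinements the paper leaves implicit, but the argument is the same.
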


\begin{proof}
  Let $\tau \in L^\infty_+(\Omega)^2$ be arbitrary, consider \eqref{eq:tau_deriv} for the location parameters $p$ and $p + h$, and subtract the former from the latter:
  $$
   B_\tau\big(D_\tau u_{p+h}(\eta) - D_\tau u_{p}(\eta), v\big) = B_\eta(u_p-u_{p+h},v) \quad \text{for all } v \in \mathcal{V}.
  $$
   Subtracting \eqref{eq:second_deriv} multiplied by $h$ gives
   $$
   B_\tau\big(D_\tau u_{p+h}(\eta) - D_\tau u_{p}(\eta) - h D_\tau u_p'(\eta), v\big) = B_\eta(u_{p} - u_{p+h} + h u'_p,v). 
   $$
   If one chooses $v = D_\tau u_{p+h}(\eta) - D_\tau u_{p}(\eta) - h D_\tau u_p'(\eta)$ and employs the coercivity \eqref{eq:coer} and continuity \eqref{eq:cont} of the considered bilinear forms, it straightforwardly follows that
   $$
   \big\| D_\tau u_{p+h}(\eta) - D_\tau u_{p}(\eta) - h D_\tau u_p'(\eta) \big\|_{H^1(\Omega)^d} \leq K \|\eta\|_{L^\infty(\Omega)^2} \| u_{p+h} -u_{p} - h u'_p \|_{H^1(\Omega)^d}
   $$
   for some $K = K(\Omega, \Gamma_{\rm D}, \tau)>0$. Assuming $h \not=0$ and $\eta \not= 0$, dividing by $h \|\eta\|_{L^\infty(\Omega)^2}$ and taking the supremum over all $\eta$ satisfying $\| \eta \|_{L^\infty(\Omega)^2} = 1$, we finally get
   \begin{align*}
     \frac{1}{h} \big\| D_\tau N(p+h, \tau) -  D_\tau N(p, \tau) - D_p & D_\tau  N(p, \tau)h \big \|_{\mathcal{L}(L^\infty(\Omega)^2, H^1(\Omega)^d)} \\
     & \leq  \frac{K}{h} \| u_{p+h} -u_{p} - h u'_p \|_{H^1(\Omega)^d},
   \end{align*}
   which tends to zero as $h \to 0$ by virtue of Corollary~\ref{corollary:p_diff}. This concludes the proof.
     \end{proof}

Observe that we could have as well proved that the mapping $D_pN(p, \, \cdot \,): L^\infty_+(\Omega)^2 \to \mathcal{L}(\R, \mathcal{V})$ is Fr\'echet differentiable, with its derivative $D_\tau D_p N(p, \tau) \in \mathcal{L}(L^\infty(\Omega)^2, \mathcal{L}(\R, \mathcal{V}))$ given by
$$
D_\tau D_p N(p, \tau): \eta \mapsto \big( h \mapsto h D_\tau u_p'(\eta) \big).
$$
This result follows from the same argument that leads to Lemma~\ref{lemma:tau_deriv}, but with the boundary condition $g_p$ in \eqref{var_form} replaced by $-g'_p$. In particular, the order in which $N$ is differentiated can be changed, which would also follow by proving that $D_p D_\tau N : \R \times L^\infty_+(\Omega)^2 \to \mathcal{L}(\R, \mathcal{L}(L^\infty(\Omega)^2, \mathcal{V}))$, or $D_\tau D_p N : \R \times L^\infty_+(\Omega)^2 \to \mathcal{L}(L^\infty(\Omega)^2, \mathcal{L}(\R, \mathcal{V}))$, is continuous. In other words, $D_p D_\tau N(p,\tau)$ and  $D_\tau D_p N(p, \tau)$ coincide as bilinear mappings on $\R \times L^\infty(\Omega)^2$.

\begin{remark}
  \label{remark:sampling}
  In the following sections, we are mainly interested in the derivatives of the map
$$
  \gamma N:
  \left\{
  \begin{array}{l}
    (p,\tau) \mapsto {u_p}|_{\Gamma_{\rm N}}, \\[2mm]
    \R \times L^\infty_+(\Omega)^2 \to L^2(\Gamma_{\rm N})^d,
  \end{array}
  \right.
  $$
  where $\gamma: H^1(\Omega)^d \to L^2(\Gamma_{\rm N})^d$ is the bounded Dirichlet trace operator on $\Gamma_{\rm N}$. The required derivatives of this map can be obtained from the results of Corollary~\ref{corollary:p_diff}, Lemma~\ref{lemma:tau_deriv} and Theorem~\ref{theorem:second_deriv} by taking traces of the elements of $H^1(\Omega)^d$ defining the derivatives $D_{p}N$, $D_{\tau} N$ and $D_p D_{\tau} N$. In particular, the Dirichlet boundary values of the solutions to \eqref{eq:tau_deriv} and \eqref{eq:second_deriv} can alternatively be assembled by utilizing the formulas
   \begin{equation}
    \label{eq:tau_deriv_sample}
  \int_{\Gamma_{\rm N}} f \cdot D_{\tau} u_p(\eta) \, {\rm d} s = - B_{\eta}(u_p, u_f)
  \end{equation}
  and 
  \begin{equation}
    \label{eq:second_deriv_sample}
  \int_{\Gamma_{\rm N}} f \cdot D_{\tau} u'_p(\eta) \, {\rm d} s = - B_{\eta}(u'_p, u_f),
  \end{equation}
  where $u_f \in \mathcal{V}$ is the solution of \eqref{var_form} with $g_p$ replaced by $f \in L^2(\Gamma_{\rm N})^d$. These follow straightforwardly by comparing \eqref{var_form} to \eqref{eq:tau_deriv} and  \eqref{eq:second_deriv}, respectively, and they enable solving for all derivatives we need in (Bayesian) optimal experimental design without having to solve any other variational problems than \eqref{var_form}.
\end{remark}

%%%%%%%%%%%%%%%%%%%%%%%%%%%%%%%%%%%%%%%%%%%%%%%%%%%%%%%%%%%%%%%%%%%%%%%%%%%%%%%%%%%%%%%%%%%%%%%%%%%%%%%%%%%%%%%%%%%%%%%%%%%%%%%%%%%%%%%%%%%%%%%%%%%%%%%%%%

\section{Finite-dimensional linearized forward model}
\label{sec:discretized}

Let us adopt as our measurement model the linearization of the forward map $\gamma N(p, \, \cdot \,): L^\infty_+(\Omega)^2 \to L^2(\Gamma_{\rm N})^d$ around the expected Lam\'e parameter pair $\tau_0 = (\lambda_0, \mu_0) \in L^\infty_+(\Omega)^2$, that is, our aim is to perform optimal Bayesian experimental design with respect to $p \in \R$ assuming that the linearized forward map at $\tau_0$ is an accurate enough measurement model for our purposes. The measurements on the displacement field on $\Gamma_{\rm N}$ are modeled by ``sensor functions'' $s_1, \dots, s_M \in [L^2(\Gamma_{\rm N})^d]^* \cong L^2(\Gamma_{\rm N})^d$, which leads to investigating the behavior of the bounded linear map
\begin{equation}
  \label{eq:inf_forward_map}
  \mathcal{F}(p):
  \left\{
  \begin{array}{l}
    \eta \mapsto \Big[ \big \langle \gamma D_\tau N( p, \tau_0) \eta, s_m \big \rangle_{L^2(\Gamma_{\rm N})^d} \Big]_{m=1}^M, \\[3mm]
    L^\infty(\Omega)^2 \to \R^M,
  \end{array}
  \right.
  \end{equation}
as a function of $p \in \R$. Note that a sensor function may,~e.g.,~measure a weighted mean value of $\nu \cdot D_\tau N( p, \tau_0) \eta$ over some small section of $\Gamma_N$,~i.e.,~a weighted mean of the normal component of the `linearized' relative boundary displacement caused by a pressure activation at $\gamma(p) \in \partial \Omega$ and corresponding to a perturbation $\eta$ in the background Lam\'e parameters $\tau_0$.

The interpretation of measurements as dual evaluations with sensor functions allows one to utilize \eqref{eq:tau_deriv_sample} and \eqref{eq:second_deriv_sample} of Remark~\ref{remark:sampling} to assemble the mapping $\mathcal{F}: \R \to \mathcal{L}( L^\infty(\Omega)^2, \R^M)$ and its derivative. Indeed, by choosing $f = s_m$ in \eqref{eq:tau_deriv_sample} and \eqref{eq:second_deriv_sample}, and referring to Theorem~\ref{theorem:second_deriv}, it follows that
\begin{equation}
  \label{eq:meas_sampling}
\mathcal{F}_m(p) \eta = \langle \gamma D_\tau N( p, \tau_0) \eta, s_m \rangle_{L^2(\Gamma_{\rm N})^d} =  \int_{\Gamma_{\rm N}} s_m \cdot D_{\tau} u_p(\eta) \, {\rm d} s = - B_{\eta}(u_p, u_{s_m})
\end{equation}
and
\begin{equation}
  \label{eq:deriv_sampling}
D_p \mathcal{F}_m(p) \eta = D_p \langle \gamma D_\tau N( p, \tau_0) \eta, s_m \rangle_{L^2(\Gamma_{\rm N})^d} =  \int_{\Gamma_{\rm N}} s_m \cdot D_{\tau} u'_p(\eta) \, {\rm d} s = - B_{\eta}(u'_p, u_{s_m}),
\end{equation}
where $u_{s_m} \in \mathcal{V}$ is the solution of \eqref{var_form} for $g = s_m$ that is assumed to be real-valued. Thus, computing $\mathcal{F}$ and its derivatives at a given $p \in \R$ essentially only requires solving for $u_p$ and $u'_p$, the latter by replacing $g_p$ with  $-g_p'$ in \eqref{var_form}, and then evaluating the bilinear forms on the right-hand sides of \eqref{eq:meas_sampling} and \eqref{eq:deriv_sampling} for all $m=1, \dots, M$ and for the considered perturbations $\eta$.  Observe that the auxiliary solutions $u_{s_m}$, $m=1, \dots, M$, can be precomputed and stored prior to running the optimization algorithm. Note also that all solutions of variational problems appearing in \eqref{eq:meas_sampling} and \eqref{eq:deriv_sampling} correspond to the background Lam\'e parameters $\tau = \tau_0$.

To further simplify the studied model, we assume that both components of the perturbation $\eta$ are given as linear combinations of some basis functions $\psi_1, \dots, \psi_N \in L^\infty(\Omega)$, which enables identifying $\eta$ with an element of $\R^{2N}$. This parametrization can be related to  nodal values in a {\em finite element} (FE) discretization employed when numerically solving for displacement fields in Section~\ref{sec:numerics}, but it can also originate from prior information on the expected behavior of the perturbations in the Lam\'e parameters. As our aim is to optimize a sequence of activation locations $p_1, \dots, p_K$, we abuse the notation by setting $p = (p_1, \dots, p_K) \in \R^K$ and define the finite-dimensional multi-activation version of \eqref{eq:inf_forward_map} via
  \begin{equation}
  \label{eq:forward_map}
  F(p):
  \left\{
  \begin{array}{l}
    \alpha \mapsto \begin{bmatrix} \mathcal{F}(p_1)\eta_\alpha \\ \vdots \\ \mathcal{F} (p_K)\eta_\alpha \end{bmatrix}, \\[10mm]
    \R^{2N} \to \R^{KM},
  \end{array}
  \right.
  \end{equation}
  where
  \begin{equation}
    \label{eq:parametrization}
  \eta_\alpha = \sum_{n=1}^N \big( \alpha_n \psi_n, \alpha_{N+n} \psi_n \big) \in L^\infty(\Omega)^2.
  \end{equation}
  Take note that $F: \R^{K} \to \R^{KM \times 2N}$ is a (continuously) differentiable map that can be evaluated, together with its partial derivatives, based on \eqref{eq:meas_sampling}, \eqref{eq:deriv_sampling} and \eqref{eq:forward_map}.

  \begin{remark}
    One could as well optimize the locations (or other properties) of the sensor functions $s_1, \dots, s_M$. In fact, even the required analysis would be essentially the same as that for the positions of the activations presented above due to the self-adjointness of the considered elliptic partial differential equation. Be that as it may, only the optimization of the activation locations is considered in the numerical experiments of Section~\ref{sec:numerics}.
    \end{remark}

\section{Bayesian inversion and experimental design}
\label{sec:bayes}

Assume the linearized finite-dimensional measurement model \eqref{eq:forward_map} and an additive noise process, which means that a noisy measurement $y \in \R^{KM}$ can be written as
\begin{equation}
  \label{eq:meas_model}
y = F(p) \alpha + \omega,
\end{equation}
where $p \in \R^K$ is a vector of activation positions, $\alpha \in \R^{2N}$ defines the Lam\'e parameters via \eqref{eq:parametrization}, and $\omega \in \R^{KM}$ models the measurement noise. In Bayesian inversion, $\alpha$, $y$ and $\omega$ are treated as random variables. Our prior information on the unknown of primary interest $\alpha$ is encoded in the prior probability density $\pi_{\rm pr}: \R^{2N} \to \overline{\R_+}$. According to the Bayes' formula, the posterior density for $\alpha$ reads
\begin{equation}
  \label{eq:Bayes}
\pi(\alpha \, | \, y;  p) = \frac{\pi(y \, | \, \alpha;  p) \pi_{\rm pr}(\alpha)}{\pi(y; p)},
\end{equation}
where $\pi(y \, | \, \cdot \, ;  p): \R^{2N} \to \overline{\R_+}$ is the likelihood function. Here and in what follows, each $\pi$ denotes a probability density, the precise interpretation of which should be clear from the context.

We assume that the prior and noise are independent Gaussians,~i.e.,~$\alpha \sim \mathcal{N}(0, \Gamma_{\rm pr})$ and $\omega \sim \mathcal{N}(0, \Gamma_{\rm noise})$, where $\Gamma_{\rm pr} \in \R^{2N \times 2N}$ and $\Gamma_{\rm noise} \in \R^{KM \times KM}$ are symmetric and positive definite covariance matrices. Assuming a zero mean for $\alpha$ can be motivated by appropriately choosing the background Lam\'e parameter pair $\tau_0$. On the other hand, if the mean of the noise were not initially zero, it could be subtracted from both sides of \eqref{eq:meas_model}, thus redefining the measurement and a new zero-mean noise term. Be that as it may, neither of these means affects the target function of A-optimality introduced below.

Under the above assumptions, the posterior density \eqref{eq:Bayes} is also Gaussian, with the mean and covariance
\begin{subequations}
   \label{eq:posterior_cov2}
   \begin{align}
     \widehat{\alpha}(p) &=  \Gamma_{\rm pr} F(p)^{\top} \big(F(p) \Gamma_{\rm pr} F(p)^{\top} + \Gamma_{\rm noise}  \big)^{-1} y, \\[2mm]
     \label{eq:posterior_cov}
  \Gamma_{\rm post}(p) &= \Gamma_{\rm pr} - \Gamma_{\rm pr} F(p)^{\top} \big(F(p) \Gamma_{\rm pr} F(p)^{\top} + \Gamma_{\rm noise}  \big)^{-1} F(p) \Gamma_{\rm pr} ,
   \end{align}
   \end{subequations}
respectively \cite{Kaipio06}. By using the Woodbury matrix identity, \eqref{eq:posterior_cov2} could be transformed into a form that involves the inversion of $\Gamma_{\rm post}(p)^{-1}= \Gamma_{\rm prior}^{-1} + F(p)^{\top} \Gamma_{\rm noise}^{-1} F(p) \in \R^{2 N \times 2 N}$ instead of $F(p) \Gamma_{\rm pr} F(p)^{\top} + \Gamma_{\rm noise} \in \R^{KM \times KM}$. However, in our numerical experiments the dimensions $2N$ are $KM$ are so low that the choice between these formulations is not essential.

\subsection{A-optimality}
\label{sec:A_and_D}
An A-optimal experimental design minimizes the expected squared distance from the mean of the posterior to the unknown of primary interest in a given seminorm. Assume that the considered seminorm corresponds to the positive semidefinite weight matrix $A^{\top} \!A$, with $A \in \R^{2 N \times 2 N}$. It can be straightforwardly deduced that in our setup, forming an A-optimal design is equivalent to finding $p_{\rm A} \in \R^K$ that satisfies (see,~e.g.,~\cite[Appendix~A]{Burger21})
\begin{equation}
\label{eq:Aoptimal}
p_{\rm A} = {\rm arg} \min_{p \in \R^K} \Phi_{\rm A}(p),
\end{equation}
with
\begin{equation}
  \label{eq:Atarget}
\Phi_{\rm A}(p)  := {\rm tr}  \big(A \Gamma_{\rm post}(p) A^{\top}\big) = {\rm tr}  \big(\Gamma_{\rm post}(p) A^{\top} \! A\big),
\end{equation}
where the latter equality follows from the invariance of the matrix trace under cyclic perturbations. The matrix $A$ is used to weigh the expected reconstruction error,~i.e.,~the expected difference between the posterior mean and the underlying true unknown, differently in different directions. As examples, choosing $A$ to be the identity matrix $I$ corresponds to using the squared Euclidean norm as the measure of reconstruction error, and setting $A = I_{\rm ROI}$, with the diagonal matrix $I_{\rm ROI}$ having ones as the diagonal elements for the degrees of freedom corresponding to a {\em region of interest} (ROI) and zeros as the other elements, leads to only considering the squared Euclidean error over the ROI. Moreover, if the basis functions in \eqref{eq:parametrization} correspond to a FE discretization, it may be reasonable to choose $A^{\top} \!A$ as the corresponding mass matrix, so that the reconstruction error is approximately measured in the squared norm of $L^2(\Omega)$. 

The A-optimization target $\Phi_{\rm A}(p)$ depends on the activation positions through \eqref{eq:posterior_cov} and \eqref{eq:forward_map}. Hence, its gradient can be straightforwardly, but tediously, calculated by applying standard differentiation formulas of matrix functions to \eqref{eq:Atarget} and \eqref{eq:posterior_cov} and then utilizing \eqref{eq:meas_sampling}. We do not present these calculations and formulas here but instead refer to,~e.g.,~the master's thesis \cite{Pohjavirta21} for further details.

\section{Implementation}
\label{sec:implementation}
This section introduces the setting for our numerical tests. We begin with definitions needed for describing the deterministic setup,~i.e.,~the domain $\Omega$, the pressure activations, the measurement sensors and the parametrization for the unknown. Subsequently, we briefly consider the prior and noise distributions as well as the weight matrix $A$ for the A-optimality criterion. Finally, the implementation of a gradient descent algorithm employed in some of our examples is discussed.

\begin{figure}[t]
\centering 
\includegraphics[width=0.49\textwidth]{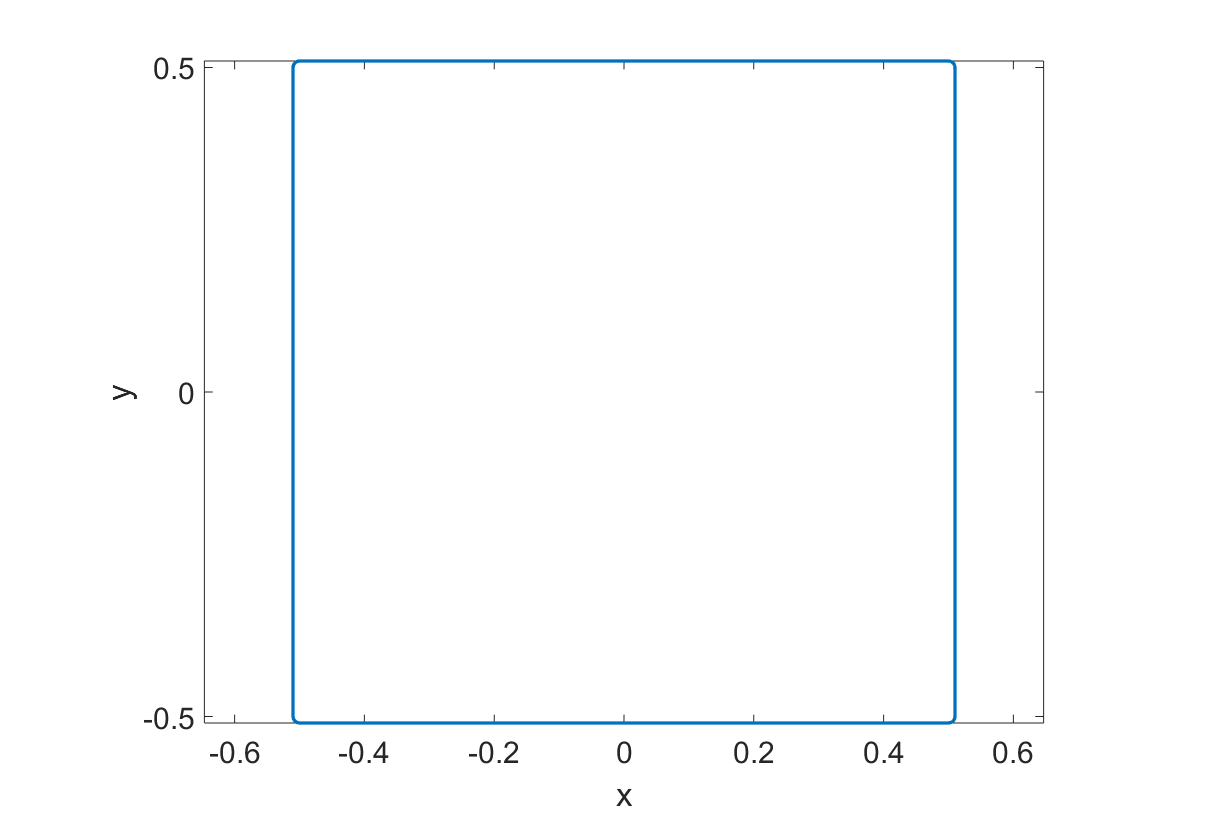}
\includegraphics[width=0.49\textwidth]{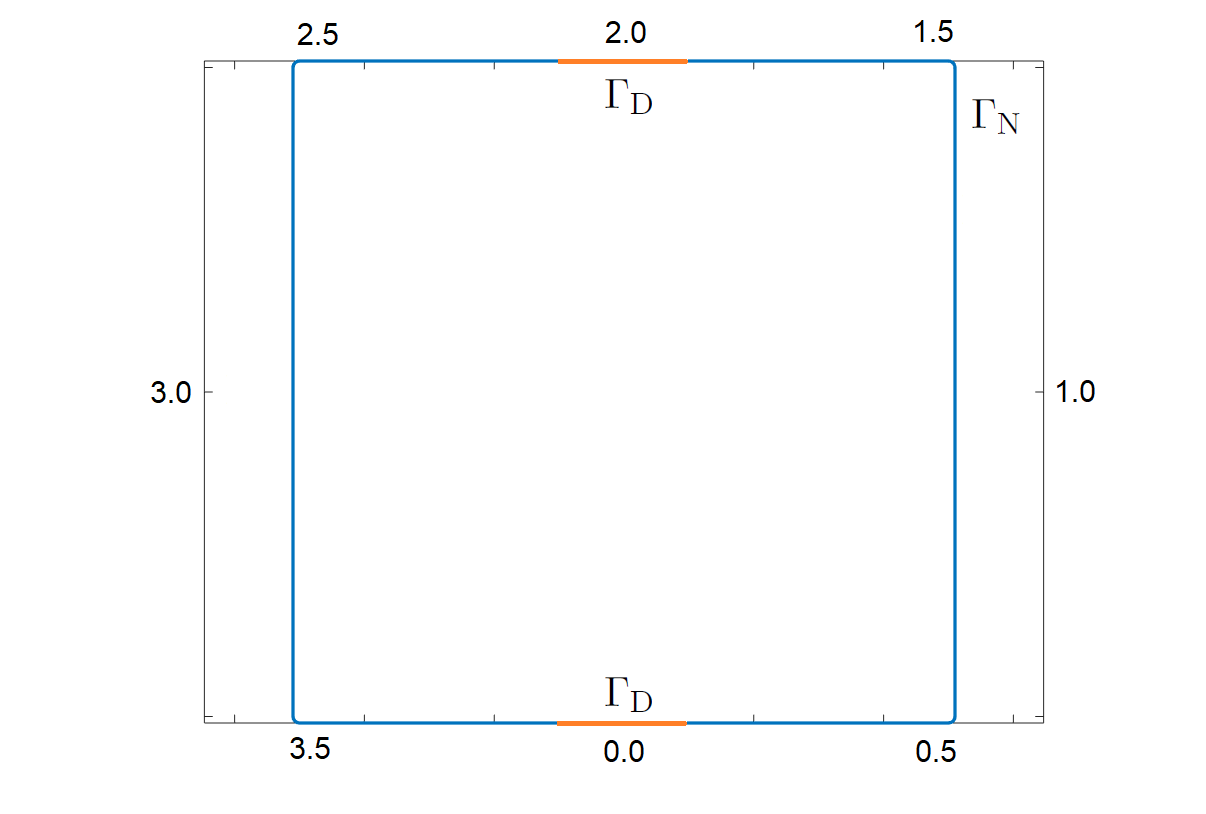}
\caption{Test object $\Omega$,~i.e.,~a unit square with rounded corners. Left: $\Omega$ embedded in $\R^2$. Right: Arclength parametrization of $\Omega$ including the Neumann (blue) and Dirichlet (orange) boundaries.}\label{setting_test_object}\label{param_testobject}
\end{figure}
\noindent

\subsection{Deterministic setup}
The computational domain $\Omega$ is the slightly rounded square shown in Figure~\ref{setting_test_object}, with the unit of length being meter. The circumference of $\Omega$ is $L= 4 + 2\pi r$, where $r=10^{-3}$ is the radius of the circular arcs that smoothen the corners of the square. The arclength parametrization $\gamma: [0, L) \to \R^2$ of the boundary $\partial \Omega$ in the counter clockwise direction is explicitly written down in Appendix~\ref{app:A}; the most essential detail to note here is that the parameter value $t=0$ corresponds to the midpoint of the bottom edge of $\Omega$. The left-hand image of Figure~\ref{setting_test_object} presents $\Omega$ embedded in $\R^2$, whereas the right-hand image shows the arclength parameters for a few boundary points and visualizes the Dirichlet and Neumann boundaries. The former is composed of boundary segments of length $0.2$ at the middle of the top and bottom edges of $\Omega$.

Let $\tilde{\nu} = \nu \circ \gamma$ denote the arclength parametrization for the exterior unit normal of~$\Omega$. The shape of our arclength-parametrized activation field $\tilde{g} = g \circ \gamma: \R \to \R^2$ is
\begin{equation}
  \label{eq:no_move_g}
\tilde{g}(t) = \exp\left(-\frac{\left(\cos\left(\frac{2\pi}{L}t \right)-1\right)^2}{2\sigma^2}\right) \tilde{\nu}(t), \qquad \sigma > 0,
\end{equation}
which is a smooth $L$-periodic vector field normal to $\partial \Omega$. The absolute value of $\tilde{g}$ reaches its maximum value of $1$ only at $t=0$ (modulo $L$), and it is symmetric with respect to $t=0$, bearing some resemblance to a Gaussian bell curve with a standard deviation $\sigma > 0$. The $p$-transferred version $\tilde{g}_p: \R \to \R^2$ is defined as
\begin{equation}
  \label{eq:move_g}
\tilde{g}_p(t) = \exp\left(-\frac{\left(\cos\left(\frac{2\pi}{L}(t-p) \right)-1\right)^2}{2\sigma^2}\right) \tilde{\nu}(t) \, ,
\end{equation}
the absolute value of which attains its maximum at $p \in \R$. In particular, $p \in [0, L)$ gives the distance along $\partial \Omega$ in the counter clockwise direction from the midpoint of the bottom edge of $\Omega$ to the point of highest pressure in the boundary activation $g_p = \tilde{g}_p \circ \gamma^{-1}|_{\partial \Omega}$.  In precise mathematical terms, the support of $g_p$ is $\partial \Omega$, but from the numerical standpoint, it vanishes on most of $\partial \Omega$ if the standard deviation $\sigma$ is small. The pressure field $g_p$ for $p = 1+\tfrac{\pi}{2}r$ is depicted in Figure~\ref{g_different_supports} for $\sigma = 0.001, 0.01$ and $0.05$, of which $\sigma = 0.01$ is the value used in our numerical studies. 

  \begin{remark}
    The $p$-transferred boundary pressure $g_p$ defined by \eqref{eq:move_g} is not strictly speaking compatible with the construction in Section~\ref{sec:frechet} since the unit normal in \eqref{eq:move_g} is not translated by $-p$. However, it is easy to check that the analysis of Section~\ref{sec:frechet} remains valid for such $g_p$ if the $p$-derivative of the pressure field $g_p'$ is (re)defined by only differentiating and translating the scalar multiplier of the unit normal in \eqref{eq:no_move_g} and leaving the unit normal itself untouched.
    \end{remark}
  
\begin{figure}[t]
\centering 
\includegraphics[width=1\textwidth]{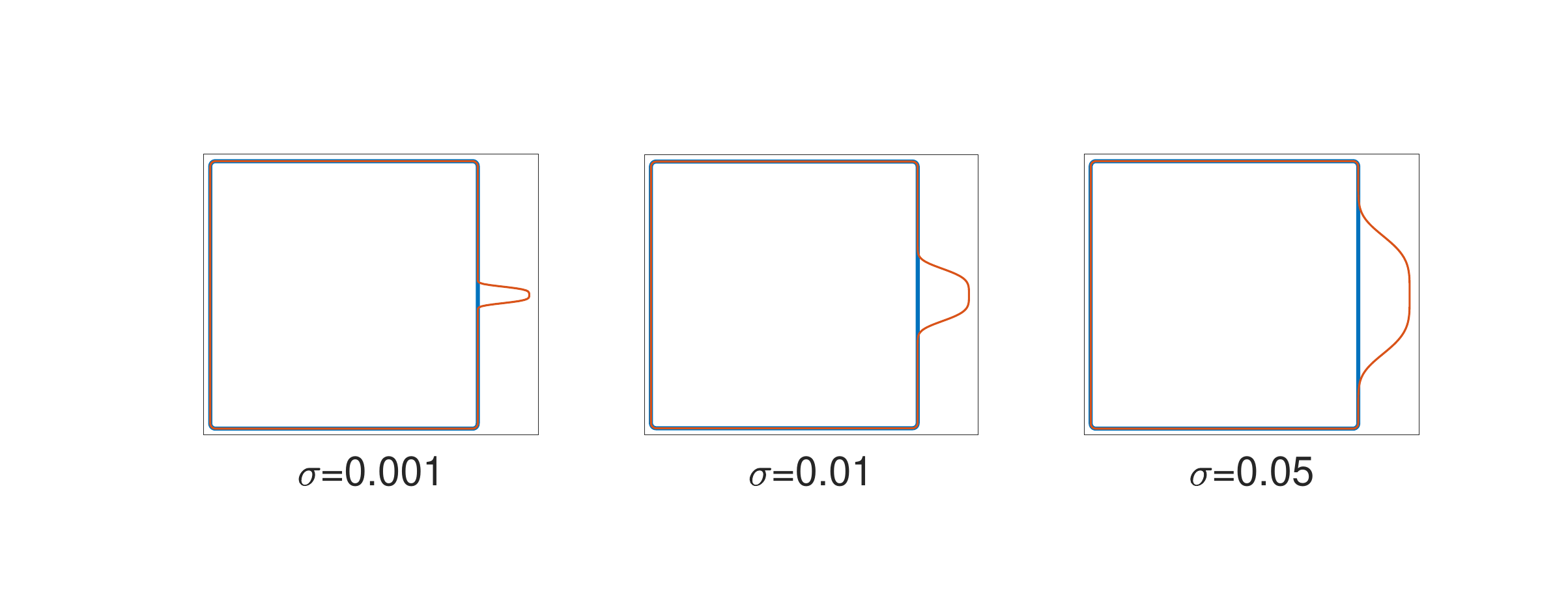}
\caption{The pressure field $g_p$ for $p=1+\tfrac{\pi}{2}r$ and three values for $\sigma$.}\label{g_different_supports}
\end{figure}
\noindent

The sensor functions, employed in Section~\ref{sec:discretized} to define the discrete measurements, are chosen to be
\begin{align*}
s_m=g_{p_m},\quad m=1,\ldots,M,
\end{align*}
\noindent
where $p_m$ are parameters corresponding to equidistant points on $\Gamma$, with the number of measurements fixed to $M=20$ in the examples of Section~\ref{sec:numerics}. The considered measurements are thus weighted averages of the normal displacement field on $\partial \Omega$.

What remains to be chosen is the parametrization for $\eta$,~i.e.,~for the perturbations in $\lambda$ and $\mu$ around their background values. Let us assume that $\lambda$ and $\mu$ are naturally represented in some FE basis as
\begin{align*}
  \lambda =  \sum_{n=1}^{\widetilde{N}} \lambda_n \phi_n,
   \qquad 
\mu = \sum_{n=1}^{\widetilde{N}} \mu_n \phi_n, \qquad \lambda_n, \mu_n\in\mathbb{R}.
\end{align*}
\noindent
A computationally straightforward option is to choose the functions $\psi_1,\ldots, \psi_N$ defining the Lam\'e perturbation in \eqref{eq:parametrization} as sums of the finite element basis functions $\phi_1,\ldots,\phi_{\widetilde{N}}$:
\begin{align}\label{psi}
  \psi_n = \sum_{\widetilde{n} \in \mathcal{N}_n} \phi_{\widetilde{n}}, \qquad n = 1, \dots, N \leq \tilde{N},
\end{align}
where $\mathcal{N}_1, \dots, \mathcal{N}_N$ are disjoint subsets of $\{ 1, \dots, \widetilde{N} \}$. In our numerical studies, $N = 50$ and the subsets $\mathcal{N}_1, \dots, \mathcal{N}_N$ are chosen such that $\psi_1, \dots, \psi_N$ approximate the characteristic functions of the triangular subdomains of $\Omega$ shown in Figure~\ref{subdomains}. Take note that these subdomains have approximately the same area.

\begin{figure}[t]
\centering 
\includegraphics[width=0.4\textwidth]{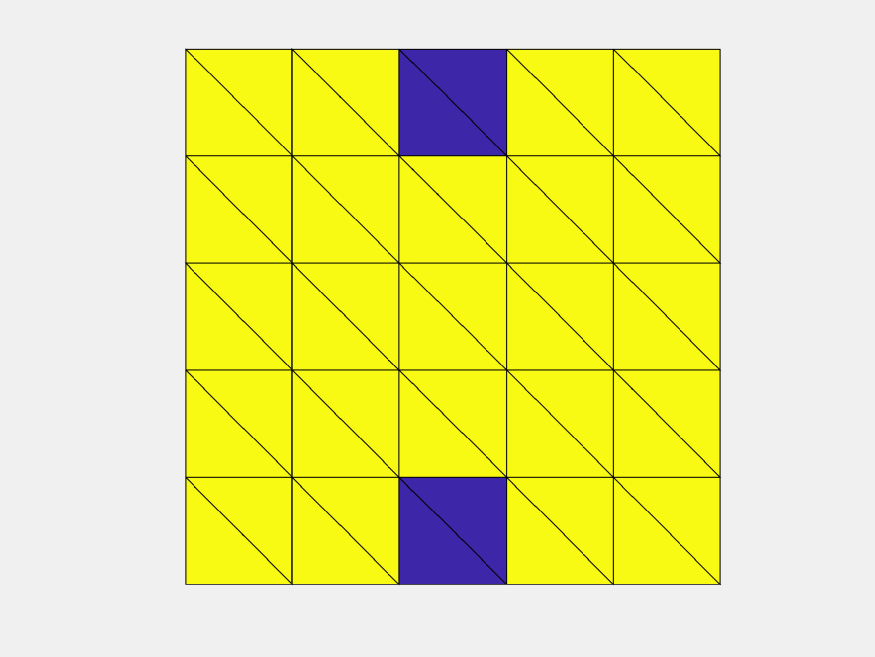}
\caption{Subdomains of $\Omega$ employed in an (approximately) piecewise constant parametrization for the Lam\'e parameter perturbation $\eta$.}\label{subdomains}
\end{figure}

Assume the above choices and definitions. For a single activation position $p \in \R$, the elements in the matrix $F(p)\in \mathbb{R}^{M\times 2N}$ (cf.~\eqref{eq:forward_map}) and those in its elementwise derivative $F'(p) = \tfrac{\partial}{\partial p} F(p)\in \mathbb{R}^{M\times 2N}$ can be numerically evaluated via the following steps, of which the zeroth need not be repeated for new values of $p$:

\bigskip

\begin{itemize}
\item[0.] As initialization, solve for $u_{s_m}$, $m=1,\ldots,M$, at $\tau = \tau_0$.
  \smallskip
\item[1.] Compute $F(p)$:
\begin{itemize}
\item[(i)] Solve for $u_p$ at $\tau = \tau_0$, if it does not equal an already computed $u_{s_m}$.
\item[(ii)] Employing \eqref{eq:forward_map}, compute
\begin{align*}\label{F}
F_{m, n}(p)=-B_{(\psi_n,0)}(u_p,u_{s_m}) \quad \text{and} \quad F_{m, n+N}(p)=-B_{(0,\psi_n)}(u_p,u_{s_m})
\end{align*}
\noindent
for $m=1,\ldots,M$ and $n=1,\ldots,N$.
\end{itemize}
\smallskip
\item[2.] Compute the derivative $F'(p)$:
\begin{itemize}
\item[(i)] Solve for $u_p^\prime$ at $\tau = \tau_0$.
\item[(ii)] Employing (\ref{eq:deriv_sampling}), compute 
\begin{align*}
 F_{m,n}'(p) = -B_{(\psi_n,0)}(u^\prime_p, u_{s_m}) \quad \text{and} \quad F_{m,N+n}'(p) =-B_{(0,\psi_n)}(u^\prime_p,u_{s_m})
\end{align*}
\noindent
for $m=1,\ldots,M$ and $n=1,\ldots,N$.
\end{itemize}
\end{itemize}

\bigskip

When considering $K$ activations, the complete system matrix $F(p_1, \dots, p_K) \in \mathbb{R}^{KM\times 2N}$ can be built by stacking $F(p_1), \dots, F(p_K)$; see Section~\ref{sec:discretized} and especially~\eqref{eq:forward_map}. Note that only one of the submatrices forming $F(p_1, \dots, p_K)$ depends on the location parameter $p_l$ of any single boundary pressure activation.

The above scheme was implemented in Comsol using approximately $1632$ triangles and $6$ piecewise quadratic basis functions for each triangle for solving the required variational problems. In particular, the Dirichlet boundary was implemented in Comsol, and it was given no further attention during the optimization process, apart from the blue subdomains in Figure \ref{subdomains} being excluded from the ROI, as explained in more detail in the following section. (If the support of a pressure activation $g_p$ overlaps with $\Gamma_{\rm D}$, the part intersecting the Dirichlet boundary is ignored in the computations in accordance with the model introduced in Section~\ref{sec:real_forward}.)

\subsection{Framework for Bayesian OED}
\label{sec:priors}
With the linearized forward operator $F(p)$ in hand, the entities that still need to be defined to evaluate the A-optimality target function in \eqref{eq:Atarget} are the noise $\Gamma_{\rm noise}$ and prior $\Gamma_{\rm pr}$ covariances as well as the weight matrix $A$. The former two are required for forming the posterior covariance via \eqref{eq:posterior_cov}.

We assume a block diagonal prior covariance matrix ${\rm diag}(\gamma_\lambda^2 \Gamma_0, \gamma_\mu^2 \Gamma_0) \in \R^{2N \times 2N}$, where $\Gamma_0$ is defined elementwise as
\begin{equation}
 \label{eq:prior_cov}
 (\Gamma_{\rm prior})_{i,j} = \exp \left(-\frac{| x_i - x_j |^2}{2\ell^2} \right), \qquad i, j = 1, \dots, N,
 \end{equation}
with $x_i$ and $x_j$ denoting the midpoints of the subdomains $i$ and $j$ in Figure~\ref{subdomains}. The parameters $\gamma_\lambda, \gamma_\mu>0$ are the subdomain-wise standard deviations for the respective Lam\'e parameters, and $\ell>0$ is the so-called correlation length that controls {\em a priori}  spatial variations in the perturbations of the parameters. This model assumes no correlation between the two parameters, but it expects their spatial variations to be of similar nature as indicated by the common correlation length. In our numerical experiments, we choose $\ell = 0.1$ and although the distinct standard deviations would allow for defining different scales for the perturbations in $\lambda$ and $\gamma$, we set $\gamma_\lambda = \gamma_\mu = 1$ for simplicity. On the other hand, the components of the additive noise process are assumed to be uncorrelated with a common variance,~i.e.,~$\Gamma_{\rm noise} = \sigma^2 I \in \R^{KM\times KM}$, where the value $\sigma^2= 10^{-3}$ is used throughout the numerical experiments.

As mentioned in Section~\ref{sec:A_and_D}, the weight matrix $A$ in \eqref{eq:Atarget} could be,~e.g.,~chosen to define a ROI inside $\Omega$ or account for the different sizes of the subdomains associated with the degrees of freedom in  the parametrization for the Lam\'e parameter perturbations. As the subdomains in Figure~\ref{subdomains} are approximately of the same size, we do not need to worry about the latter aspect. However, to avoid certain instability issues, we exclude the subdomains lying the closest to $\Gamma_{\rm D}$ from the ROI. That is, we define $A = I_{\rm ROI}$ as an identity matrix with its diagonal elements corresponding to the indices of the four blue subdomains in Figure~\ref{subdomains} replaced by zeros.

\subsection{Gradient descent algorithm}
\label{sec:grad_desc}
As mentioned at the end of Section~\ref{sec:A_and_D}, being able to differentiate the forward operator $F(p)$ with respect to the design parameter vector $p$ also enables computing the gradient for the A-optimality target function $\Phi_{\rm A}(p)$ of \eqref{eq:Atarget} by applying matrix differentiation formulas. The gradient can then be used in implementing a gradient descent algorithm for minimizing $\Phi_{\rm A}(p)$ with respect to (some components of) $p$. The basic idea of gradient descent is, of course, to proceed from the current iterate $p^{(k)}$ to the direction of the negative gradient, say, $d^{(k)}$. The only implementation details that need still to be settled are the employed method for line search in the direction $d^{(k)}$ and the stopping criteria for the algorithm --- the employed initial guesses are defined in the numerical studies of Section~\ref{sec:numerics}. We refer to \cite{Nocedal06} for more information on gradient descent and other methods of numerical optimization.

After computing the negative gradient direction $d^{(k)}$ at the current iterate $p^{(k)}$, a line search on an equidistant grid of $50$ points is performed over a line segment of length $5^{-1} L$ in the direction of $d^{(k)}$. The grid point that produces the largest decrease in the value of $\Phi_{\rm A}$ is dubbed $p^{(k+1)}$. If no improvement in the target function value is observed, the length of the line segment is reduced by a factor of $5^{-1}$, and the search is repeated with same number of grid points. If necessary, this procedure of reducing the search interval is repeated $5$ times. If no reduction in the target function is observed, the whole algorithm is terminated.

There are also two other stopping criteria that are monitored. The algorithm is terminated if either
  $$
  \frac{\Phi_{\rm A}(p^{(k)}) - \Phi_{\rm A}(p^{(k+1)})}{\Phi_{\rm A}(p^{(k)})} <  10^{-4}
  $$
  or
  $$
  \big| p^{(k)} - p^{(k+1)} \big|  < 10^{-3}
  $$
  for $5$ times in a row. There are naturally also many other options for the implementation of the line search step in gradient descent and for the stopping criteria. We do not claim that our choices are optimal, but they seem to function well enough in our setting.

\section{Numerical experiments}
\label{sec:numerics}

This section presents our numerical examples in the setting introduced above. First, only three pressure activations are considered, and an exhaustive search is compared to a greedy sequential approach, with an option to enhance the latter with a simple heuristic that guarantees finding at least a local minimum. Next, the number of activations is increased to ten and the (enhanced) sequential search is compared to gradient descent. The section is completed with a simple demonstration on the effect of the background material parameters on optimal designs.

\subsection{Comparison of exhaustive and sequential search}
\label{sec:num1}

In our first numerical experiment, the background Lam\'e parameter values are chosen as $\mu_0=1.1852 \cdot 10^9$\,Pa and $\lambda_0=2.7654\cdot 10^9$\,Pa, which could model acrylplastic. The exhaustive search for optimal positions of three boundary pressure activations is performed on a $J\times J\times J$ equidistant mesh of arclength parameter points on $[0, L)^3$, with $J=200$. Note that the obvious symmetry in the roles of the pressure activations can be exploited so that one does not actually need to evaluate $\Phi_{\rm A}$ at all $8\cdot 10^6$ mesh points, but the exhaustive search can be carried out with only $\tfrac{1}{6}J(J+1)(J+2) = 1.3534\cdot 10^6$ evaluations of the target function. Be that as it may, the number of required function evaluations grows in any case as $O(J^K)$, with $K$ denoting the number of pressure activations in the design, which makes exhaustive search unsuited for OED with a high number of pressure activations.

  \begin{figure}[t]
\centering 
\includegraphics[width=1\textwidth]{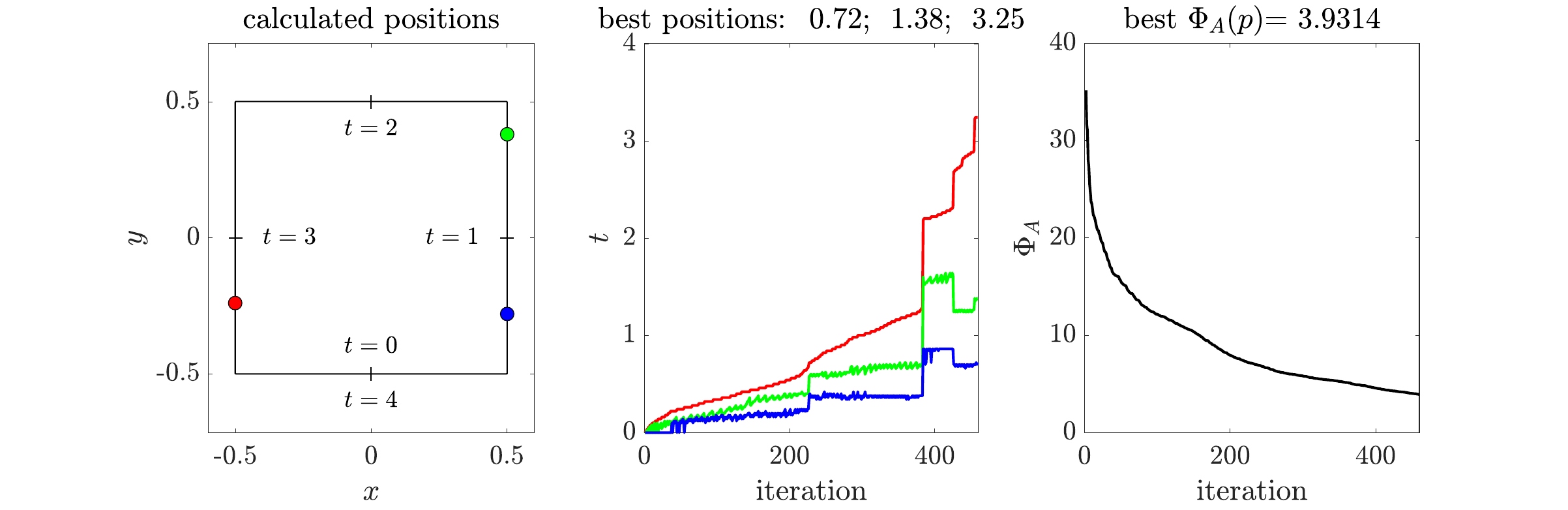}
\caption{Exhaustive search on a $200\times 200\times 200$ grid. Left: Optimal design of three pressure activations corresponding to the arclength parameter triplet $p^* = (0.72, 1.38, 3.25)$. Middle: Progress of the search, with an ``iteration'' referring to the instances when the estimate for $p^*$ was updated in the exhaustive search. Right: Evolution of the optimization target, with the final optimal value $\Phi_{\rm A}(p^*)=3.93$.
}\label{exhaustive}
\end{figure}

  The results of the exhaustive search are visualized in Figure~\ref{exhaustive}. The left-hand image shows the optimal positions for the activations, whereas the middle image illustrates the progress of the algorithm with the same color-coding for the activations as in the left-hand image. Throughout the search, the order of the arclength parameters for the three activations is maintained the same: each time $\Phi_{\rm A}$ is evaluated, the red activation lies the furthest from and the blue activation the closest to $t=0$ in the counter clockwise direction. In the middle image of Figure~\ref{exhaustive}, the vertical axis refers to the arclength parameters of the three activations, and each ``iteration'' on the horizontal axis indicates an evaluation $\Phi_{\rm A}$ that was smaller than its best previously stored value. The right-hand image shows the evolution of the best recorded evaluation for $\Phi_{\rm A}$ as a function of these iterations, leading in the end to the globally optimal value $\Phi_{\rm A}(p^*) = 3.93$. Here and in what follows, we denote by $p^*$ the optimized design parameter vector produced by the considered algorithm that should be clear from the context. Notice that due to symmetry, there must actually be (at least) four equally optimal experimental designs, of which the algorithm found one.

\begin{figure}[t]
\centering 
\includegraphics[width=1\textwidth]{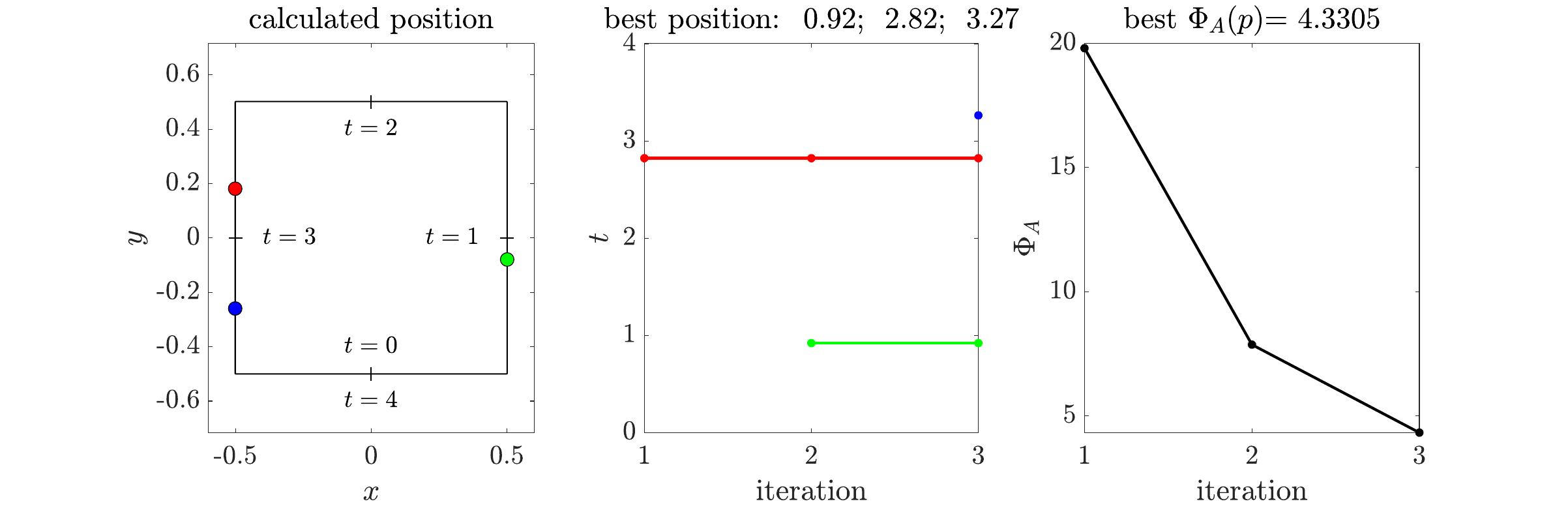}
\caption{Greedy sequential algorithm. Left: Optimized design for three pressure activations corresponding to the arclength parameter triplet $p^* = (0.92, 2.82, 3.27)$. Middle: Progress of the algorithm, with each ``iteration'' referring to an introduction of a new pressure activation whose position has been optimized by a one-dimensional exhaustive search. Right: Evolution of the optimization target, with the final value $\Phi_{\rm A}(p^*)=4.33$.
}\label{sequal_without_line_search}
\end{figure}  

Figure~\ref{sequal_without_line_search} presents the results of a greedy sequential search for finding positions for the three pressure activations, cf.~\cite{Burger21}. The idea is to consider $\Phi_{\rm A}$ in turns as a function of only one of the three activation positions, performing a one-dimensional exhaustive search on the grid of $J$ points corresponding to that variable, fixing the position of the activation to be the found minimizer, and then proceeding to introducing and selecting the position for the next activation. On the positive side, the required number for evaluations of $\Phi_{\rm A}$ in such a sequential algorithm is only $O(JK)$, but on the negative side, there is no guarantee that even a local optimum is found with respect to all activation positions. The left-hand image of Figure~\ref{sequal_without_line_search} shows the found positions for the three activations, the middle image reveals the order in which the activations were introduced in the sequential algorithm (with an ``iteration'' now referring to an incorporation of a new activation to the design), and the right-hand image depicts the evolution of the optimization target. Although the sequential algorithm is able to produce a design that corresponds to a much lower value for $\Phi_{\rm A}$ than a random setup of three pressure activations (cf.~the right-hand image of Figure~~\ref{exhaustive}), the final value of $\Phi_{\rm A}(p^*) = 4.33$ is still $10$\% higher than the actual minimum of $3.93$ found by the exhaustive search.

In order to increase the accuracy of the sequential search without too heavily compromising its computational attractivity, we combine it with gradient descent in a sequential manner: each time after performing a one-dimensional exhaustive search to add a new pressure activation to the design, the positions of {\em all} activations introduced by that point  are fine-tuned by running the gradient descent algorithm described in Section~\ref{sec:grad_desc}. This extra step guarantees that the design of pressure activations corresponds to a local minimum of $\Phi_{\rm A}$ before a new activation is added or the whole algorithm is terminated. The performance of this {\em enhanced sequential search} is analyzed in Figure~\ref{sequal_with_line_search}, which is organized in the same way as Figure~\ref{sequal_without_line_search} for the standard sequential algorithm, apart from ``iteration'' on the horizontal axes of the middle and right-hand images referring to either a one-dimensional exhaustive search or a step in the gradient descent algorithm. The optimized pressure activation design shown in the left-hand image closely resembles the one resulting from the exhaustive search in Figure~\ref{exhaustive}, and the value for the A-optimality target function attained at the end of the algorithm,~i.e.,~$\Phi(p^*)=4.01$, is only about $2$\% higher than that resulting from the exhaustive search.

\begin{figure}[t]
\centering 
\includegraphics[width=1\textwidth]{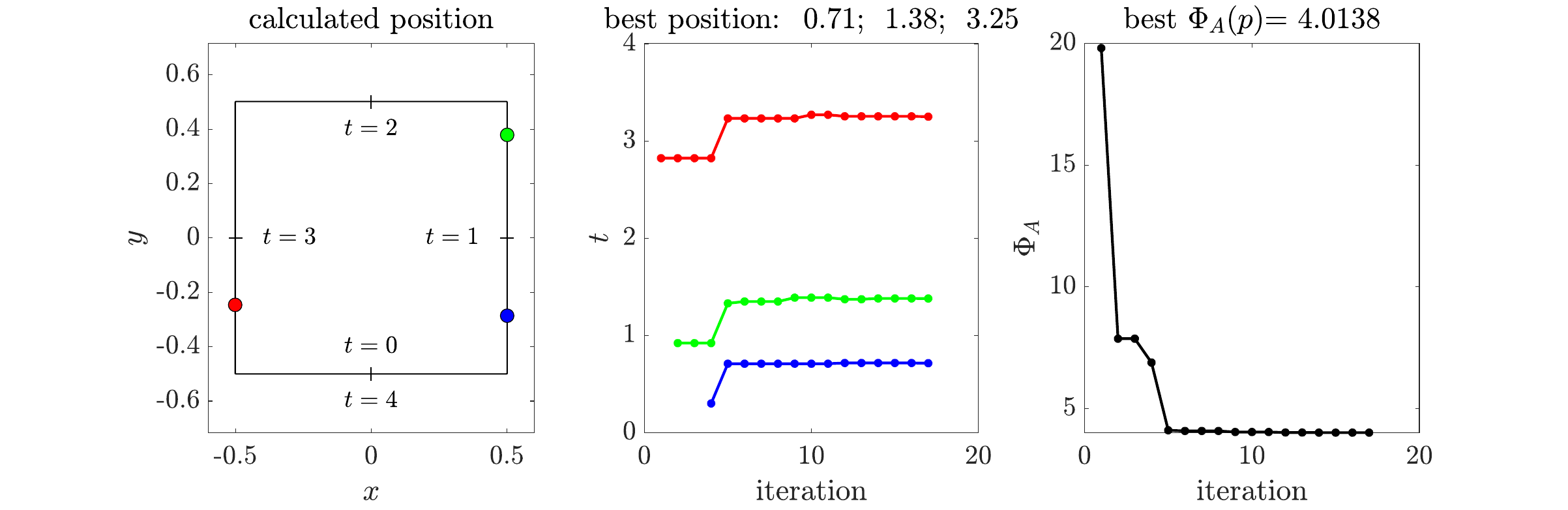}
\caption{Sequential algorithm enhanced by gradient descent. Left: Optimized design for three pressure activations corresponding to the arclength parameter triplet $p^* = (0.71, 1.38, 3.25)$. Middle: Progress of the algorithm, with an ``iteration'' referring to the introduction of a new pressure activation via a one-dimensional exhaustive search or a step of gradient descent for fine-tuning the design after such an introduction. Right: Evolution of the optimization target, with the final value $\Phi_{\rm A}(p^*)=4.01$.
}\label{sequal_with_line_search}
\end{figure}

\subsection{Sequential search and gradient descent for several activations}
In the second experiment, we continue to work with the same background Lam\'e parameter values $\mu_0=1.1852 \cdot 10^9$\,Pa and $\lambda_0=2.7654\cdot 10^9$\,Pa. However, the number of pressure activations is increased to $K=10$ to allow a more practical setting. Three optimization approaches are compared: the basic greedy sequential search, the enhanced sequential search and an application of gradient descent to an initial guess of equidistant activations. The implementations of the sequential algorithms are the same as in the previous section, whereas the gradient descent algorithm proceeds as described in Section~\ref{sec:grad_desc}.

The results for the basic greedy sequential search are presented in Figure~\ref{10_acti_20_sensor_without_line_search}. The figure is organized in the same way as Figure~\ref{sequal_without_line_search}, but it considers a higher number of sequential introductions of new pressure activations. In particular, the middle image indicates the order in which the activations are added to the design. The algorithm seems to generally prefer a relatively uniform grid of activations, but it places less emphasis on the top and bottom edges of $\Omega$ that contain the Dirichlet boundary. The optimized design corresponds to the value $\Phi_{\rm A}(p^*) = 0.87$ that is about a fifth of the value attained with the basic sequential search with only three activations; note that the first three iterations in Figure~\ref{10_acti_20_sensor_without_line_search}, in fact, exactly correspond to the results presented in Figure~\ref{sequal_without_line_search}.

\begin{figure}[t]
\centering 
\includegraphics[width=1\textwidth]{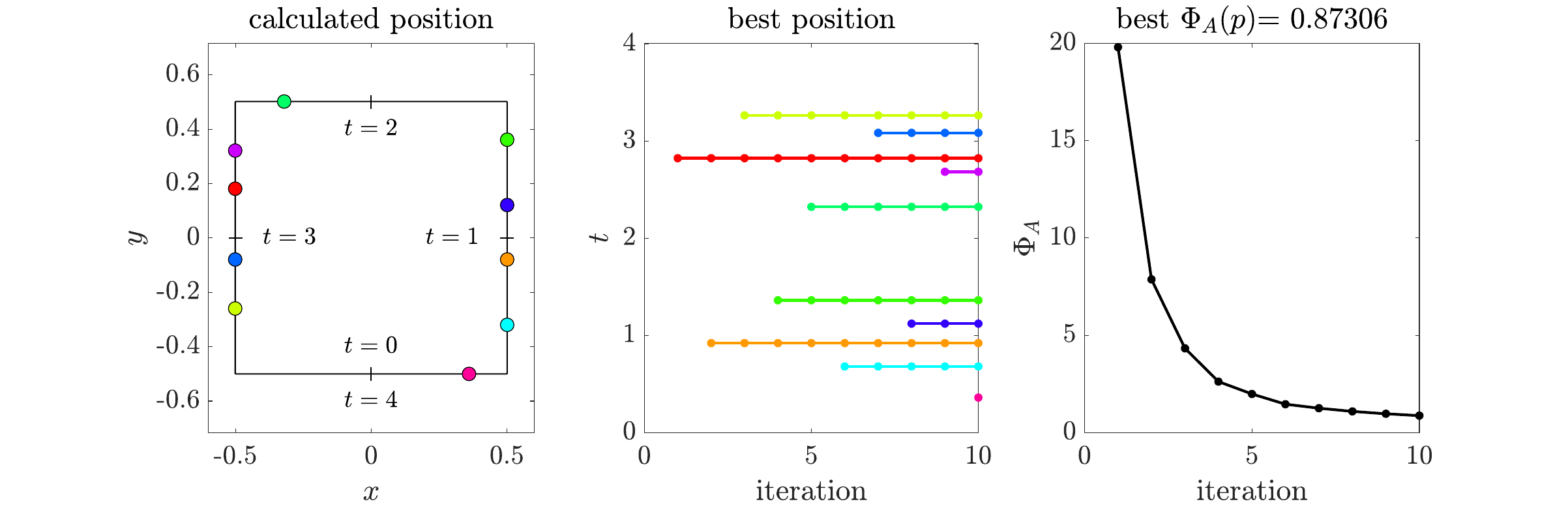}
\caption{Greedy sequential algorithm. Left: Optimized design for ten pressure activations. Middle: Progress of the algorithm, with each ``iteration'' referring to an introduction of a new pressure activation whose position has been optimized by a one-dimensional exhaustive search. Right: Evolution of the optimization target, with the final value $\Phi_{\rm A}(p^*)=0.87$.
}\label{10_acti_20_sensor_without_line_search}
\end{figure}

\begin{figure}[t]
\centering 
\includegraphics[width=1\textwidth]{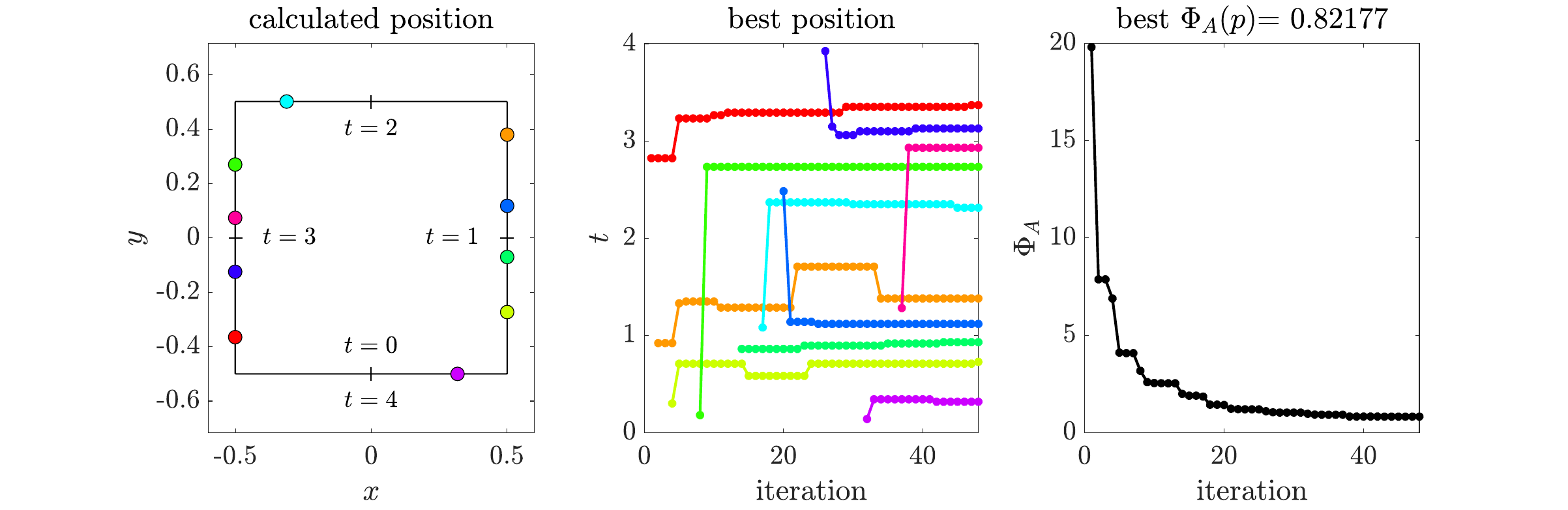}
\caption{Sequential algorithm enhanced by gradient descent. Left: Optimized design for ten pressure activations. Middle: Progress of the algorithm, with an ``iteration'' referring to the introduction of a new pressure activation via a one-dimensional exhaustive search or a step of gradient descent for fine-tuning the design after such an introduction. Right: Evolution of the optimization target, with the final value $\Phi_{\rm A}(p^*)=0.82$.
}\label{10_acti_20_sensor_with_line_search}
\end{figure}

Figure~\ref{10_acti_20_sensor_with_line_search}, which is organized in the same way as Figure~\ref{sequal_without_line_search}, visualizes the progress of the enhanced sequential search that applies gradient descent to the whole design each time after introducing a new pressure activation. Although the fine-tuning by gradient descent seems to significantly affect the structure of the design at some intermediate steps of the algorithm, the final optimized configuration with ten pressure activations is qualitatively similar to that obtained with the basic sequential search. In this case, the optimized design corresponds to the A-optimality target value $\Phi_{\rm A}(p^*) = 0.82$ that is about $6$\% less than without enhancing the algorithm with intermediate gradient descent steps.

A direct application of gradient descent to an initial guess of an approximately equidistant set of pressure activations is documented in Figure~\ref{sequal_with_line_search_it_max_15}. As always, the left-hand image shows the optimized activation positions, the middle image visualizes how they move, and the right-hand image depicts the evolution of the A-optimality target. The optimized design is qualitatively similar to those produced by the sequential algorithms, with the final value of $\Phi_{\rm A}(p^*) = 0.82$ which is (almost) the same as for the enhanced sequential search. For this particular setup, the enhanced sequential algorithm and mere  gradient descent were thus able to find equally good local minimizers. However, it should be noted that the quality of the local minimizer produced by gradient descent often depends on the initial guess.

\begin{figure}[t]
\centering 
\includegraphics[width=1\textwidth]{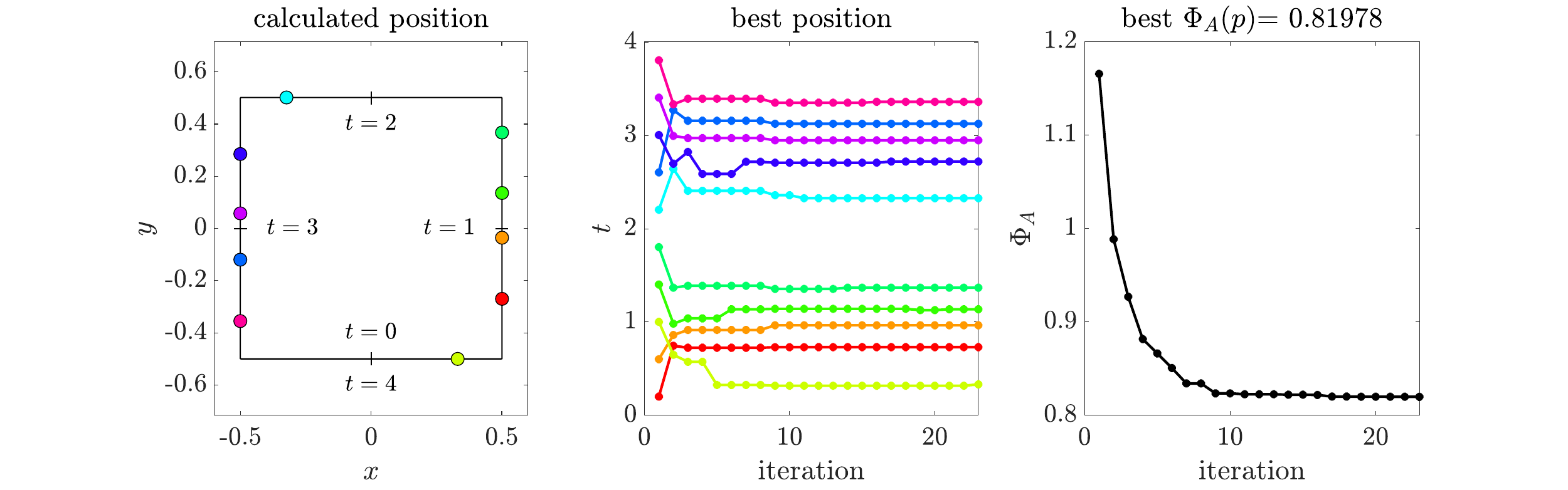}
\caption{Gradient descent with an equidistant initial guess for the pressure activations. Left: Optimized design of ten pressure activations. Middle: Progress of the algorithm. Right: Evolution of the optimization target, with the final value $\Phi_{\rm A}(p^*)=0.82$.
}\label{sequal_with_line_search_it_max_15}
\end{figure}

\subsection{Designs for stiff and loose background Lam\'e parameters}

As the final experiment, we repeat for ``stiff'' and ``loose'' background materials the exhaustive search with three pressure activations presented in Section~\ref{sec:num1}. The background Lam\'e parameters chosen for the stiff material are $\lambda_0=3.9188\cdot 10^{10}$\,Pa and $\mu_0= 5.3675\cdot 10^{10}$\,Pa that correspond to gray cast iron, whereas those employed for the loose material,~i.e.,~$\lambda_0=8.1081\cdot 10^8$\,Pa and $\mu_0=3.3784\cdot 10^7$\,Pa, could model rubber.

The results for the stiff and loose background materials are, respectively, visualized in Figures~\ref{stiff} and \ref{loose} that have the same structure as Figure~\ref{exhaustive} for a material of intermediate stiffness (acrylplastic) with $\lambda_0=2.7654\cdot 10^9$\,Pa and $\mu_0=1.1852 \cdot 10^9$\,Pa. By comparing the three figures, it becomes apparent that the optimal design depends on the properties of the background material, but it is difficult to give intuitive explanations for the differences between the deduced designs. Moreover, it is probable that, in addition to the stiffness/looseness of the material, also the ratio between $\mu_0$ and $\lambda_0$ affects the optimal design: recall that in Section~\ref{sec:priors}, it was assumed that the perturbations in $\lambda$ and $\mu$ have the same pointwise standard deviations, which means that the assumed level of relative change in the two Lam\'e parameters depends on the respective background values.

\begin{figure}[t]
\centering 
\includegraphics[width=1\textwidth]{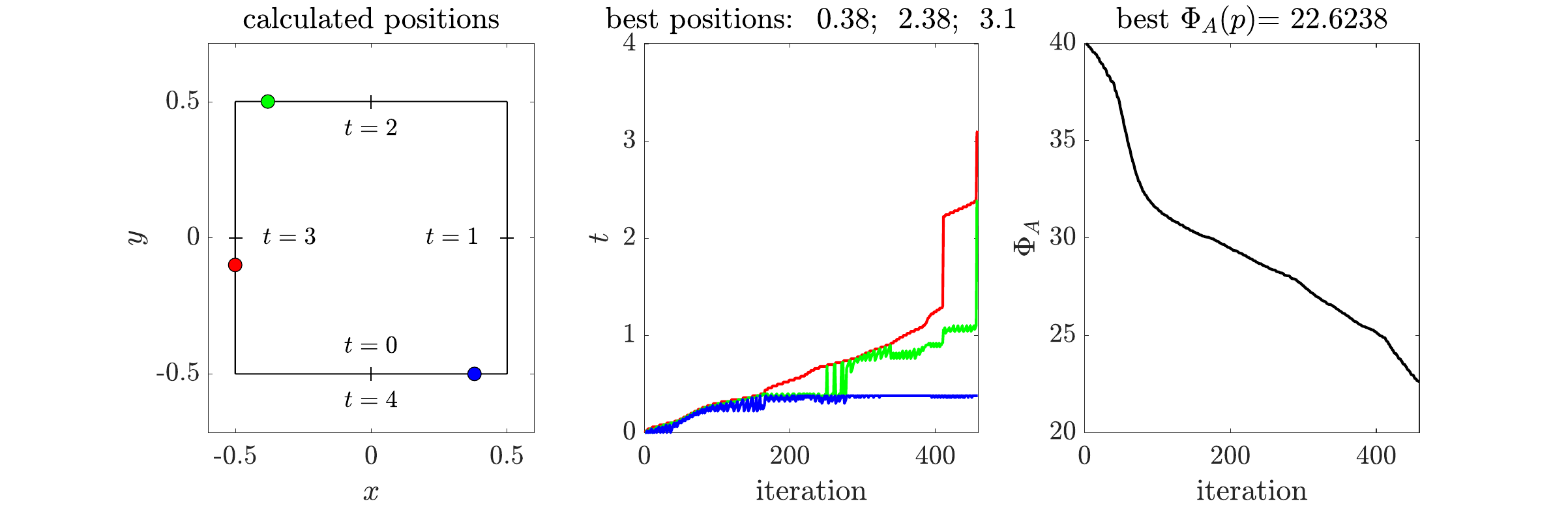}
\caption{Exhaustive search for a {\em stiff} material (gray cast iron). Left: Optimal design of three pressure activations corresponding to the arclength parameter triplet $p^* = (0.38, 2.38, 3.1)$. Middle: Progress of the search, with an ``iteration'' referring to the instances when the estimate for $p^*$ was updated in the exhaustive search. Right: Evolution of the optimization target, with the final optimal value $\Phi_{\rm A}(p^*)=22.62$.
}\label{stiff}
\end{figure}

The optimal designs for the stiff and loose materials in Figures~\ref{stiff} and \ref{loose} correspond to the A-optimality target values of $22.62$ and $0.12$, respectively. If one adopts the optimal design deduced for the intermediately stiff material from Figure~\ref{exhaustive} for the stiff and loose cases, the resulting values for $\Phi_{\rm A}$ are $25.75$ and $0.28$, which are considerably worse than the optimal values for these cases. This further demonstrates the effect that the background Lam\'e parameters have on A-optimality of experimental designs. According to Figures~\ref{stiff}, \ref{exhaustive} and \ref{loose}, the looser the background material is, the lower is the value of $\Phi_{\rm A}$ that measures the expected squared reconstruction error for the linearized model.

\begin{figure}[t]
\centering 
\includegraphics[width=1\textwidth]{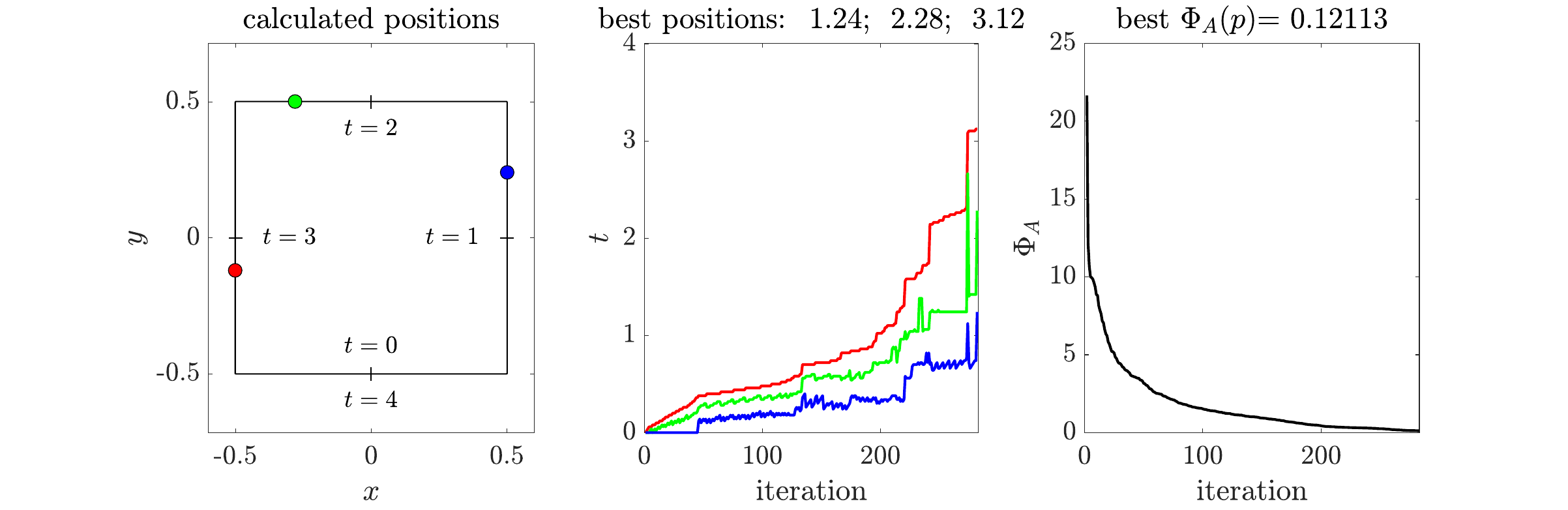}
\caption{Exhaustive search for {\em soft} material (rubber). Left: Optimal design of three pressure activations corresponding to the arclength parameter triplet $p^* = (1.24, 2.28, 3.12)$. Middle: Progress of the search, with an ``iteration'' referring to the instances when the estimate for $p^*$ was updated in the exhaustive search. Right: Evolution of the optimization target, with the final optimal value $\Phi_{\rm A}(p^*)=0.12$.
}\label{loose}
\end{figure}
\noindent

\section{Concluding remarks}
\label{sec:conclusions}

This work introduced Bayesian OED as a design tool for the two-dimensional inverse boundary value problem of linear elasticity with a realistic discrete model for the boundary measurements. Based on a linearization of the forward model around a background level for the Lam\'e parameters, a couple of algorithms for finding A-optimal experimental designs with respect to the positions of the applied boundary pressure activations were introduced and numerically tested. All considered optimization algorithms were able to significantly decrease the value of the A-optimality target function compared to,~e.g.,~random designs, but apart from an exhaustive search none of them could reliably find the global optimum. Indeed, based on our numerical experiments, the optimization process typically suffers from a number local minima, and coping with them is arguably an important aspect to be considered in further studies on this topic. Other interesting topics for future investigations are understanding how the scales for the two Lam\'e parameters $\lambda$ and $\mu$ or a ROI inside the imaged object affect optimal designs as well as applying Bayesian OED to a three-dimensional setting for linear elasticity or to its original nonlinear inverse boundary value problem.

\appendix

\section{Parametrization of the domain}
\label{app:A}

For the sake of completeness, this appendix gives an arclength parametrization $\gamma: [0, L) \to \partial \Omega \subset \R^2$ for the boundary of the computational domain $\Omega$. Recall that $\Omega$  is the rounded square with circumference $L= 2\pi r+4$ shown in Figure~\ref{setting_test_object}, with $r$ being the radius of the circular arcs at the corners and the parameter value $t=0$ corresponding to the midpoint of the bottom face of the square. The first component of $\gamma$ is given as
\begin{align*}
\gamma_1 (t)&=\left\{
\begin{array}{ll}
		t, \quad &0\leq t <0.5, \\[1mm]
    0.5+r\cos\left(\frac{t-0.5}{r}+\frac32\pi\right), \quad &0.5\leq t<\frac{\pi}{2}r+0.5,\\[1mm]
    r+0.5, \quad &\frac{\pi}{2}r+0.5\leq t<\frac{\pi}{2}r+1.5, \\
		0.5+r\cos\left(\frac{t-\left(\frac{\pi}{2}r+1.5\right)}{r}\right), \quad &\frac{\pi}{2}r+1.5\leq t<\pi r+1.5, \\[1mm]
    \pi r+2-t, \quad &\pi r+1.5\leq t<\pi r+2.5, \\[1mm]
    r\cos\left(\frac{t-(\pi r+2.5)}{r}+\frac{\pi}{2}\right)-0.5, \quad &\pi r+2.5\leq t<\frac32\pi r+2.5, \\[1.5mm]
    -r-0.5, \quad &\frac32\pi r+2.5\leq t<\frac32\pi r+3.5,\\    	
		r\cos\left(\frac{t-\left(\frac32\pi r+3.5\right)}{r}+\pi\right)-0.5, \quad &\frac32\pi r+3.5\leq t<2\pi r+3.5,\\[1mm]
    t- (2\pi r + 4), \quad &2\pi r+3.5\leq t<2\pi r+4.
\end{array}
\right.
\end{align*}
and the second component as
\begin{align*}
\gamma_2(t)&=\left\{
\begin{array}{ll}
		-0.5-r, \quad &0\leq t<0.5,\\[1mm]
    -0.5+r\sin\left(\frac{t-0.5}{r}+\frac32\pi\right), \quad &0.5\leq t<\frac{\pi}{2}r+0.5,\\[1mm]
    -0.5+t-\left(\frac{\pi}{2}r+0.5\right), \quad &\frac{\pi}{2}r+0.5\leq t<\frac{\pi}{2}r+1.5,\\[1mm]
		0.5+r\sin\left(\frac{t-\left(\frac{\pi}{2}r+1.5\right)}{r}\right), \quad &\frac{\pi}{2}r+1.5\leq t<\pi r+1.5,\\
    0.5+r, \quad &\pi r+1.5\leq t<\pi r+2.5,\\[1mm]
    0.5+r\sin\left(\frac{t-(\pi r+2.5)}{r}+\frac{\pi}{2}\right), \quad &\pi r+2.5\leq t<\frac32\pi r+2.5,\\[1mm]
    \frac32\pi r+3-t, \quad &\frac32\pi r+2.5\leq t<\frac32\pi r+3.5,\\   	
		-0.5+r\sin\left(\frac{t-\left(\frac32\pi r+3.5\right)}{r}+\pi\right), \quad &\frac32\pi r+3.5\leq t<2\pi r+3.5,\\[1mm]
    -(0.5+r), \quad &2\pi r+3.5\leq t<2\pi r+4.
\end{array}
\right.
\end{align*}
\noindent
We want to remark that $\gamma(t)$ is continuously differentiable and invertible, with its inverse $\gamma^{-1}: \partial \Omega \to [0,L)$ given by
\begin{align*}
\gamma^{-1}(x,y)&=\left\{\begin{array}{ll}
x, \quad & 0\leq x \leq 0.5, \ \  y = -0.5-r,\\[1mm]
    0.5+r\left(\arccos\left(-\frac{x-0.5}{r}\right)-\frac{\pi}{2}\right), \quad &0.5<x<0.5+r, \ \ -0.5-r < y < -0.5,\\[1mm]
    1+r\frac{\pi}{2}+y, \quad & x = 0.5 + r, \ \  -0.5 \leq y \leq 0.5,   \\[1mm]
    1.5+r\frac{\pi}{2}+r\arccos\left(\frac{x-0.5}{r}\right), \quad &0.5<x<0.5+r,\ \ 0.5< y < 0.5 + r,\\[1mm]
    1.5+r\pi-(x-0.5), \quad & - 0.5 \leq  x  \leq 0.5, \ \ y = 0.5 +r, \\[1mm]
    2.5+r\frac{\pi}{2}+r\arccos\left(\frac{x+0.5}{r}\right), \quad &-0.5 -r < x<-0.5, \ \  0.5 <y < 0.5 + r, \\[1mm]    
 2.5+3r\frac{\pi}{2}-(y-0.5), \quad &x= -0.5 - r, \ \ -0.5 \leq y \leq 0.5,\\[1mm]
    3.5+3r\frac{\pi}{2}+r\pi -r\arccos\left(\frac{x+0.5}{r}\right), \quad & -0.5-r < x < -0.5, \ \  -0.5-r < y <-0.5,\\[1mm]
    4+2\pi r+x,  \quad & -0.5 \leq  x \leq 0, \ \  y =  -0.5-r,
		\end{array}
\right.
\end{align*}
where $\arccos$ denotes the principal value of arcus cosine.

\bibliographystyle{acm}
\bibliography{measurement_optimization}
\end{document}